\let\origsection=\section \def\section{\@ifstar{\origsection*}{\mysection}}
\def\mysection{\@startsection{section}{1}\z@{.7\linespacing\@plus\linespacing}{.5\linespacing}{\normalfont\scshape\centering\S}}
\numberwithin{equation}{section}
\numberwithin{figure}{section}
\def\rmlabel{\upshape({\itshape \roman*\,})}
\def\alabel{\upshape({\itshape \alph*\,})}
\let\polishlcross=\l
\def\l{\ifmmode\ell\else\polishlcross\fi}
\def\tand{\ \text{and}\ }
\def\qand{\quad\text{and}\quad}
\def\qqand{\qquad\text{and}\qquad}
\let\emptyset=\varnothing
\let\setminus=\smallsetminus
\let\sm=\setminus
\def\moverlay{\mathpalette\mov@rlay}
\def\mov@rlay#1#2{\leavevmode\vtop{   \baselineskip\z@skip \lineskiplimit-\maxdimen
   \ialign{\hfil$\m@th#1##$\hfil\cr#2\crcr}}}
\newcommand{\charfusion}[3][\mathord]{
    #1{\ifx#1\mathop\vphantom{#2}\fi
        \mathpalette\mov@rlay{#2\cr#3}
      }
    \ifx#1\mathop\expandafter\displaylimits\fi}
\newcommand{\dcup}{\charfusion[\mathbin]{\cup}{\cdot}}
\DeclareFontFamily{U}  {MnSymbolC}{}
\DeclareSymbolFont{MnSyC}         {U}  {MnSymbolC}{m}{n}
\DeclareFontShape{U}{MnSymbolC}{m}{n}{
    <-6>  MnSymbolC5
   <6-7>  MnSymbolC6
   <7-8>  MnSymbolC7
   <8-9>  MnSymbolC8
   <9-10> MnSymbolC9
  <10-12> MnSymbolC10
  <12->   MnSymbolC12}{}
\DeclareMathSymbol{\powerset}{\mathord}{MnSyC}{180}
\DeclareFontFamily{U}{MnSymbolA}{}
\DeclareFontShape{U}{MnSymbolA}{m}{n}{
    <-6>  MnSymbolA5
   <6-7>  MnSymbolA6
   <7-8>  MnSymbolA7
   <8-9>  MnSymbolA8
   <9-10> MnSymbolA9
  <10-12> MnSymbolA10
  <12->   MnSymbolA12}{}
\DeclareFontShape{U}{MnSymbolA}{b}{n}{
    <-6>  MnSymbolA-Bold5
   <6-7>  MnSymbolA-Bold6
   <7-8>  MnSymbolA-Bold7
   <8-9>  MnSymbolA-Bold8
   <9-10> MnSymbolA-Bold9
  <10-12> MnSymbolA-Bold10
  <12->   MnSymbolA-Bold12}{}
\DeclareSymbolFont{MnSyA}{U}{MnSymbolA}{m}{n}
\DeclareRobustCommand{\overleftharpoon}{\mathpalette{\overarrow@\leftharpoonfill@}}
\DeclareRobustCommand{\overrightharpoon}{\mathpalette{\overarrow@\rightharpoonfill@}}
\def\leftharpoonfill@{\arrowfill@\leftharpoondown\mn@relbar\mn@relbar}
\def\rightharpoonfill@{\arrowfill@\mn@relbar\mn@relbar\rightharpoonup}
\DeclareMathSymbol{\leftharpoondown}{\mathrel}{MnSyA}{'112}
\DeclareMathSymbol{\rightharpoonup}{\mathrel}{MnSyA}{'100}
\DeclareMathSymbol{\mn@relbar}{\mathrel}{MnSyA}{'320}
\newtheorem{theorem}{Theorem}[section]
\newtheorem{lemma}[theorem]{Lemma}
\newtheorem{proposition}[theorem]{Proposition}
\newtheoremstyle{definition}
  {4pt}
  {4pt}
  {\sl}
  {}
  {\bfseries}
  {.}
  {.5em}
  {}
\theoremstyle{definition}
\newtheorem{definition}[theorem]{Definition}
\newtheorem{observation}[theorem]{Observation}
\theoremstyle{remark}
\newtheoremstyle{introthms}
  {3pt}
  {3pt}
  {\itshape}
  {}
  {\bfseries}
  {.}
  {.5em}
  {\thmnote{#3}}
\theoremstyle{introthms}
\let\eps=\varepsilon
\let\theta=\vartheta
\let\rho=\varrho
\let\phi=\varphi
\newcommand{\mbs}{\boldsymbol}
\def\N{\mathds N}
\def\NN{\mathds N}
\def\ZZ{\mathds Z}
\def\E{\mathds{E}}
\def\P{\mathds{P}}
\newcommand{\COV}{{\mbs{\mathrm{Cov}}}}
\def\cA{{\mathcal A}}
\def\cH{{\mathcal H}}
\def\cF{{\mathcal F}}
\def\cB{{\mathcal B}}
\def\cC{{\mathcal C}}
\def\cG{{\mathcal G}}
\def\cP{{\mathcal P}}
\def\cQ{{\mathcal Q}}
\def\cW{{\mathcal W}}
\newcommand{\cQj}{\mathcal{Q}_j}
\def\cR{{\mathcal R}}
\def\cK{{\mathcal K}}
\def\cU{{\mathcal U}}
\def\cW{{\mathcal W}}
\def\DISC{\mathrm{DISC}}
\def\CL{\mathrm{CL}}
\def\DISCQd{\ensuremath{\mathrm{DISC}_{\cQ,d}}}
\def\WDISCQd{\ensuremath{\mathrm{WDISC}_{\cQ,d}}}
\def\CLQd{\ensuremath{\mathrm{CL}_{\cQ,d}}}
\def\MINQd{\ensuremath{\mathrm{MIN}_{\cQ,d}}}
\def\DEVQd{\ensuremath{\mathrm{DEV}_{\cQ,d}}}
\def\DISCQdeps{\ensuremath{\mathrm{DISC}_{\cQ,d}(\eps)}}
\def\WDISCQdeps{\ensuremath{\mathrm{WDISC}_{\cQ,d}(\eps)}}
\def\CLQdFeps{\ensuremath{\mathrm{CL}_{\cQ,d}(F,\eps)}}
\def\MINQdeps{\ensuremath{\mathrm{MIN}_{\cQ,d}(\eps)}}
\def\DEVQdeps{\ensuremath{\mathrm{DEV}_{\cQ,d}(\eps)}}
\def\tand{\ \text{and}\ }
\def\bv{\boldsymbol{v}}
\def\by{\boldsymbol{y}}
\def\bu{\boldsymbol{u}}
\def\bz{\boldsymbol{z}}
\def\bf{\boldsymbol{f}}
\def\bell{\boldsymbol{\ell}}
\def\cl{{\rm db}}
\def\deg{{\rm deg}}
\def\indi{\mathds{1}}
\def\hom{{\rm hom}}
\def\Nat{\mathbb{N}}
\def\Ext{\mathrm{Ext}}
\newcommand{\VQ}[1]{V^{#1}}
\let\preceq=\preccurlyeq
\newcommand{\arrow}[1]{\accentset{\rightharpoonup}{#1}}
\begin{document}
\title{Quasirandomness in hypergraphs}

\author[E. Aigner-Horev]{Elad Aigner-Horev}
\address{Department of Mathematics and Computer Science, Ariel University, Ariel, Israel}
\email{horev@ariel.ac.il}

\author[D. Conlon]{David Conlon}
\address{Mathematical Institute, University of Oxford, Oxford,
United Kingdom}
\email{David.Conlon@maths.ox.ac.uk}
\thanks{The second author was supported by a Royal Society University Research Fellowship and by ERC Starting Grant 676632.}

\author[H. H\`an]{Hi\d{\^e}p H\`an}
\address{Departamento de Matem\'atica y Ciencia de la Computaci\'on,
Universidad de Santiago de Chile, Santiago, Chile}
\email{hiep.han@usach.cl}
\thanks{The third author was supported by  the FONDECYT Iniciaci\'on grant  11150913 and  by Millenium Nucleus Information and Coordination in Networks.}

\author[Y. Person]{Yury Person}
\address{Institut f\"ur Mathematik, Goethe-Universit\"at,
  Frankfurt am Main, Germany}
\email{person@math.uni-frankfurt.de}
\thanks{The fourth author was supported by DFG grant PE 2299/1-1.}

\author[M. Schacht]{Mathias Schacht}
\address{Fachbereich Mathematik, Universit\"at Hamburg,
Hamburg, Germany}
\email{schacht@math.uni-hamburg.de}
\thanks{The fifth author was supported by ERC Consolidator Grant 724903.}

\date{}

\begin{abstract}
An $n$-vertex graph $G$ of edge density~$p$ is considered to be \emph{quasirandom} if it shares several important properties with
the random graph $G(n,p)$.
A well-known theorem of Chung, Graham and Wilson states that many such `typical' properties are asymptotically equivalent and, thus, a 
graph~$G$ possessing one such property automatically satisfies the others. 

In recent years, work in this area has focused on uncovering more quasirandom graph properties and on extending the known results to  
other discrete structures. In the context of hypergraphs, however, one may consider several different notions of quasirandomness. 
A complete description of these notions has been provided recently by Towsner, who proved several central equivalences using 
an analytic framework. We give short and purely combinatorial proofs of the main equivalences in Towsner's result. 
\end{abstract}
\maketitle


\section{Introduction}\label{sec:intro}

{\let\thefootnote\relax\footnote{{A strict subset of this work appeared in the EuroComb2017 conference proceedings as can be seen \href{https://www.sciencedirect.com/science/article/pii/S1571065317301002}{here}.}}}
Quasirandomness may be seen as the study of structures which share some of the typical properties of a random structure of the same size. 
This area has 
connections to and applications in several branches of pure mathematics and theoretical computer science. For further information, we 
refer the reader to the surveys~\cites{KSSSz02,KS06,Vad11}. We focus here on \emph{quasirandom graphs} and \emph{hypergraphs}.

Let $(G_n)_{n\in\NN}$ be a sequence of graphs, where $G_n$ has $n$ vertices. For a fixed $p\in[0,1]$, we say that $(G_n)_{n\in\NN}$ 
is $p$-quasirandom if the graphs $G_n$ have a \emph{uniform edge distribution} and density~$p$, that is, 
\begin{equation}\label{eq:uniform-edge-distribution}
	e(G_n[S]) = p \binom{|S|}{2} +o(n^2) \ \text{for every}\ S \subseteq V(G_n)\,,
\end{equation}
where $e(G_n[S])$ denotes the number of edges in the induced subgraph $G_n[S]$. The property above is often referred to as \emph{discrepancy}. 
Early results on quasirandom graphs implicitly appeared in~\cites{Al86,AlCh88,FRW88,Rodl86} and the systematic study was initiated by 
Thomason~\cites{Thomason-2,Thomason-1} and Chung, Graham and Wilson~\cite{CGW89}. The seminal result of Chung, Graham, and Wilson  
states that~\eqref{eq:uniform-edge-distribution} is a \emph{quasirandom property} in the sense that a sequence $(G_n)_{n\in\NN}$ satisfying  property~\eqref{eq:uniform-edge-distribution} will also satisfy several other properties typically expected (with high probability) of the random graph $G(n,p)$. For example, having uniform edge distribution is asymptotically equivalent to the property that
\begin{equation}\label{eq:cycles}
e(G_n)=p\binom{n}{2} +o(n^2) \qand N_{C_4}(G_n)=p^4n^4+o(n^4)\,,
\end{equation}
where $N_{C_4}(G_n)$ denotes the number of labeled copies of $C_4$, the cycle of length $4$, in $G_n$. This is somewhat surprising, as~\eqref{eq:cycles} seems at first glance to be a weaker condition. It is not difficult to show that \emph{any} graph $G_n$ on $n$ vertices with edge density $p$ contains at least $p^4n^4+o(n^4)$ labeled copies of $C_4$. Thus, a graph sequence $(G_n)_{n\in\NN}$ is quasirandom if and only if it is an asymptotic \emph{minimiser} for the number of copies of $C_4$.

Another quasirandom property  asserts that for every fixed graph $F$ we have
\begin{equation}\label{eq:copies}
N_{F}(G_n)=p^{e(F)}n^{v(F)}+o(n^{v(F)})\,,
\end{equation}
where again $N_{F}(G_n)$ denotes the number of labeled copies of $F$ and $v(F)$ and $e(F)$ denote the number of vertices and edges in $F$, respectively. There are also many other quasirandom properties for graphs besides those mentioned above (see, e.g.,~\cites{HL12,Janson,LS,ReiSch,Shapira,Shapira-Yuster,SimSos91,Simonovits-Sos-2,Skokan-Thoma,Yuster} and the references therein).

Besides quasirandom graphs notions of quasirandomness have been explored for other discrete structures, including 
hypergraphs~\cites{Ch90,ChuGra90hyper,HavTho89}, 
subsets of~$\ZZ/n\ZZ$~\cite{ChuGra92Zn}, 
set systems~\cite{ChuGra91sets}, 
tournaments~\cite{ChuGra91tour}, and groups~\cite{Gow08}. 
However, satisfactory generalisations to hypergraphs are surprisingly difficult to pin down. For example, R\"odl~\cite{Rodl86} observed that straightforward generalisations of~\eqref{eq:uniform-edge-distribution} and~\eqref{eq:copies} to hypergraphs are not equivalent, while a generalisation of~\eqref{eq:cycles} is anything but clear. 

More formally, let $(H_n)_{n\in\NN}$ be a sequence of $k$-uniform hypergraphs, i.e., pairs $(V_n,E_n)$ where the edge set~$E_n$ is a subset of all $k$-element subsets of $V_n$, which we denote by $\binom{V_n}{k}$, and suppose $|V_n| = n$. 
The straightforward generalisation of~\eqref{eq:copies} is
\begin{equation}\label{eq:copies_H}
N_{F}(H_n)=p^{e(F)}n^{v(F)}+o(n^{v(F)}) 
\end{equation}
for every fixed $k$-uniform hypergraph $F$, 
while the obvious generalisation of~\eqref{eq:uniform-edge-distribution} is
\begin{equation}\label{eq:disc}
e(H_n[S]) = p \binom{|S|}{k} +o(n^k)\ \text{for every}\ S \subseteq V(H_n)\,.
\end{equation}
However,~\eqref{eq:disc} does not imply~\eqref{eq:copies_H} when $k \geq 3$.
Instead, one needs to control the edges with respect to all $(k-1)$-uniform hypergraphs $G$ on the same vertex set. That is, we need to consider the property
\begin{equation}\label{eq:disc_strong}
e(H_n[G]) = p\,|\cK_k(G)| +o(n^k) \,\, \text{for every $(k-1)$-uniform } G \text{ on } V(H_n),
\end{equation}
where $e(H_n[G])$ denotes the number of edges $e$ of $H_n$ with $\binom{e}{k-1}\subseteq E(G)$ and $\cK_k(G)$ is the family of cliques on $k$ vertices that are contained in $G$. For $p = 1/2$, Chung and Graham~\cite{ChuGra90hyper} proved that~\eqref{eq:copies_H} and~\eqref{eq:disc_strong} are equivalent and that the correct generalisation of $C_4$ is the octahedron, i.e., the complete $k$-uniform $k$-partite hypergraph with classes of order $2$. Later, Kohayakawa, R\"odl and Skokan~\cite{KRS02} generalised this result to arbitrary fixed densities $p$.

More recently, it was shown by Kohayakawa, Nagle, R\"odl and Schacht~\cite{KNRS} that~\eqref{eq:disc} implies~\eqref{eq:copies_H} if one weakens the requirement of~\eqref{eq:copies_H} to counting linear (or simple) hypergraphs $F$, that is, hypergraphs where any two edges intersect in at most one vertex. As there are (weak) regularity lemmas for hypergraphs~\cites{Ch91,FrRo92,Steger} `compatible' with~\eqref{eq:disc}, this often allows one to use conceptually simpler tools for studying problems that involve linear hypergraphs only. The reverse implication, \eqref{eq:copies_H}\,$\Longrightarrow$\,\eqref{eq:disc}, was shown by Conlon, H\`an, Person and Schacht in~\cite{CHPS}, that is, provided~\eqref{eq:copies_H} holds for all linear hypergraphs $F$, then \eqref{eq:disc} also holds. The same authors also described several other such \emph{weakly quasirandom properties}, including an analogue of~\eqref{eq:cycles} where the r\^ole of $C_4$ is filled by an appropriate linear hypergraph (see~\cite{CHPS} for details). They also put forward a guess as to how one might introduce other discrepancy notions of intermediate strength and what the corresponding minimising hypergraphs should look like. Subsequently, Lenz and Mubayi~\cites{LM, LM15a, LM15b} extended the results of~\cite{CHPS} by adding a \emph{spectral property} and providing additional equivalences between certain notions of hypergraph quasirandomness of intermediate strength.

Finally, Towsner~\cite{Towsner} obtained a common generalisation of those earlier results on 
hypergraph quasirandomness, where the appropriate versions of~\eqref{eq:uniform-edge-distribution},~\eqref{eq:cycles}, and~\eqref{eq:copies} 
are equivalent. This he accomplished by using the language of non-standard analysis and hypergraph limits. By generalising constructions of Lenz and Mubayi~\cite{LM15b}, he also showed that these notions of quasirandomness are all distinct, again using analytic language. 
Towsner remarks that it would be of interest to finitise his arguments. Here we do just that, providing short combinatorial proofs for the main equivalences in  Towsner's work.

\section{Definitions and the main result} 

\subsection{Quasirandom properties for hypergraphs}
For a finite set $X$, we write $\arrow{X}$ to denote the set of all orderings of the members of $X$ and $\powerset(X)$ for its powerset. For an integer $k \geq 1$ and a set $V$, the set of all $k$-element subsets of $V$ is denoted by ${V \choose k}$ and we write~${V \choose k}_<$ to denote $\overrightharpoon{\binom{V}{k}}$. Given a set (of indices) $Q \subseteq [k]$, we write $V^Q$ for the set of all functions from $V$ to $Q$. Clearly~$V^Q$ is isomorphic to $V^{|Q|}$ 
and we refer to its members as $Q$-\emph{tuples}. Unlike the members of ${V \choose k}_<$, $Q$-tuples may contain non-distinct entries. By a $Q$-\emph{directed hypergraph}, we mean a pair $(V,E)$ where $E\subseteq V^Q$.
For a common generalisation of the `witness sets' in~\eqref{eq:disc} and~\eqref{eq:disc_strong} the following notation will be useful.

\begin{definition}
For $\cQ\subseteq \powerset([k])$, let $\cG=(G_Q)_{Q\in \cQ}$ be a sequence of $Q$-directed hypergraphs $G_Q$ on the same vertex set $V$. 
We say an ordered $k$-tuple $\bv=(v_1,\ldots,v_k)\in \binom{V}{k}_<$ is \emph{supported} by $\cG$ if, for every $Q\in\cQ$,
\[
	\bv_Q=(v_i\colon i\in Q)\in E(G_Q)\,.
\]
Moreover, we denote by $\cK_k(\cG) \subseteq {V \choose k}_<$ the set of all ordered $k$-tuples supported by $\cG$.
\end{definition}
Note that $\cK_k(\cG)=\binom{S}{k}_<$,  when we set $\cQ=\{\{1\},\dots,\{k\}\}=\binom{[k]}{1}$ and let $\cG$ consist of $k$ copies 
of the set~$S\subseteq V$ (viewed as a $1$-uniform hypergraph). Similarly, $\cK_k(\cG)=\overrightharpoon{\cK_k(G)}$ for
$\cQ=\binom{[k]}{k-1}$ and $\cG$ consists of $k$ copies of a $(k-1)$-uniform hypergraph $G$ indexed by the elements of $\cQ$.
In other words, by making appropriate choices for $\cQ$ we obtain (ordered) versions of the `witness sets' from~\eqref{eq:disc} 
and~\eqref{eq:disc_strong}. Considering ordered versions simplifies the presentation for families $\cQ$ which are not 
subfamilies of a level of the  Boolean lattice of subsets of $[k]$.
Below we define a version of discrepancy for hypergraphs for any family $\cQ\subseteq \powerset([k])$, 
which is the first quasirandom property we consider here. 

\begin{definition}[$\DISCQd$]
For an integer $k\geq 2$, a set system $\cQ\subseteq \powerset([k])$, and
reals $\eps>0$ and~$d\in[0,1]$, we say that a $k$-uniform hypergraph $H=(V,E)$
with $|V|=n$ satisfies $\DISCQdeps$ if, for every sequence $\cG=(G_Q)_{Q\in \cQ}$ of $Q$-directed hypergraphs
with vertex set~$V$,
\[
	\Big|\big|\arrow E\cap \cK_k(\cG)\big|-d\,\big|\cK_k(\cG)\big|\Big|\leq \eps n^k\,.
\]
\end{definition}

We also consider the following  weighted version of $\DISCQd$, where the sequence of directed hypergraphs 
$\cG$ is replaced by an ensemble of functions $\cW=\left(w_Q\colon V^Q\to[-1,1]\right)_{Q\in\cQ}$ and the set of supported $k$-tuples $\cK_k(\cG)$ is replaced with the function $\cW\colon V^{[k]}\to [-1,1]$ given by
\[
	\cW(\bv)=\prod_{Q\in\cQ}w_Q(\bv_Q),
\] 
where we set  $w_Q(\bv_Q)$ to be zero whenever  $\bv_Q$ is not a proper set, i.e., whenever it has any non-distinct entries.

\begin{definition}[$\WDISCQd$] \label{def:wdisc}
For an integer $k\geq 2$, a set system $\cQ\subseteq \powerset([k])$, and
reals $\eps>0$ and~$d\in[0,1]$, we say that a $k$-uniform hypergraph $H=(V,E)$
with $|V|=n$ satisfies $\WDISCQdeps$ if, for every ensemble of (weight) functions $\cW=(w_Q)_{Q\in \cQ}$
with $w_Q\colon \VQ{Q}\to[-1,1]$ for every $Q\in\cQ$,
\[
	\Bigg|\sum_{\bv\in V^{[k]}}\big(\indi_{\arrow E}(\bv)-d\big)\cW(\bv)\Bigg|\leq \eps n^k\,,
\]
where $\indi_{\arrow E}\colon V^{[k]}\to\{0,1\}$ denotes the indicator function of $\arrow E$.
\end{definition}

Letting $w_Q=\indi_{G_Q}$ for every $Q\in\cQ$, we note that the quantities 
$\sum_{\bv\in V^{[k]}}\big(\indi_{\arrow E}(\bv)-d\big)\cW(\bv)$ and $|\arrow E\cap \cK_k(\cG)|-d\,|\cK_k(\cG)|$ differ by 
$d$ times the number of $\bv\in V^{[k]}$ which have some non-distinct entries, yet are supported by $\cG$. However, this difference has order of magnitude $O_k(n^{k-1})$, so hypergraphs~$H$ satisfying $\WDISCQdeps$
must also satisfy $\DISCQd (2\eps)$ for sufficiently large $n$. The opposite implication follows by a simple averaging argument presented in Lemma~\ref{lem:disc_to_wdisc} below.

In the introduction, we noted that if a graph sequence $(G_n)_{n \in \mathbb{N}}$ with $|G_n| = n$ contains $d^{e(F)}n^{v(F)}+o(n^{v(F)})$ copies of each fixed graph $F$, then the sequence is $d$-quasirandom, that is, it satisfies the discrepancy condition~\eqref{eq:uniform-edge-distribution} with $p = d$. To state the  `counting' counterpart of $\DISCQd$ requires some notion of special hypergraphs. 

\begin{definition}[$\cQ$-simple]
\label{def:Qsimple}
We say that a $k$-uniform hypergraph $F=(V_F,E_F)$ is 
\emph{$\cQ$-simple} for a set system $\cQ\subseteq \powerset([k])$, 
if there is an ordering $E_F=\{f_1,\dots,f_m\}$ of its edges such that for every 
$i=1,\dots,m$ there is an ordering of the vertices of $f_i=\{x_{i_1},\dots,x_{i_k}\}$ with the property that for every 
$h<i$ there is a set $Q\in\cQ$ such that
\[
	\{r\colon x_{i_r}\in f_h\cap f_i\}\subseteq Q\,.
\]  
Here the orderings of the vertices for every edge of $F$ can be chosen independently and 
might not be compatible with each other. 
\end{definition}
It is easy to see that the notion of linear hypergraphs coincides with $\cQ$-simple hypergraphs for the set system $\cQ=\binom{[k]}{1}$,
while every $k$-uniform hypergraph is $\binom{[k]}{k-1}$-simple. The correct analogue of~\eqref{eq:copies_H} for hypergraphs 
having $\DISCQd$ is now the restriction to $\cQ$-simple hypergraphs~$F$ stated below.

\begin{definition}[$\CLQd$]
For an integer $k\geq 2$, a set system $\cQ\subseteq \powerset([k])$,
reals $\eps>0$, $d\in[0,1]$, and a $\cQ$-simple $k$-uniform hypergraph $F=(V_F,E_F)$,
we say that a $k$-uniform hypergraph $H=(V,E)$
with $|V|=n$ satisfies $\CLQdFeps$ if 
the number $N_F(H)$ of labeled copies of $F$ in $H$ satisfies
\[
	\Big|N_F(H)-d^{e(F)}n^{v(F)}\Big|\leq \eps n^{v(F)}\,.
\]
\end{definition}

Next we consider the appropriate generalisation of~\eqref{eq:cycles} for our setting.
Given a $k$-partite $k$-uniform hypergraph $F$ with vertex partition $V(F)=X_1\dcup\dots\dcup X_k$ and a set $Q\subseteq [k]$,
we define the \emph{$Q$-doubling of $F$} to be the hypergraph $\cl_Q(F)$
obtained by taking two copies of $F$ and identifying the vertex classes indexed by elements in $Q$. That is, the vertex set of the $Q$-doubling is 
\[
	V(\cl_Q(F))=Y_1\dcup\dots\dcup Y_k \quad\text{where}\quad Y_q=
		\begin{cases} 
			X_q\, & \text{if}\ q\in Q\,\\
			X_q\times \{0,1\}& \text{if}\ q\not\in Q\,
		\end{cases}
\]
and the edge set of the $Q$-doubling is the collection of all $k$-element sets of the form
$$
\{x_q\colon q\in Q\}\dcup\{(x_r,a)\colon r\in[k]\setminus Q\},$$
where $\{x_1,\dots,x_k\}\in E(F)$ and $a \in \{0,1\}$.
It is easy to check that for any two sets $Q$, $R\subseteq [k]$ and any $k$-partite $k$-uniform 
hypergraph~$F$ the ordering of the doubling operations does not matter, i.e.,
\[
	\cl_Q(\cl_R(F))=\cl_R(\cl_Q(F))\,.
\]
Hence, for $\cQ\subseteq \powerset([k])\setminus\{[k]\}$ (the operation $\cl_{[k]}$ leaves the hypergraph unchanged), we may define the $\cQ$-simple $k$-partite $k$-uniform hypergraph $M_{\cQ}$ 
recursively by setting
\[
	M_{\emptyset}=K_k^{(k)}\,,
\]
to be the $k$-partite $k$-uniform hypergraph consisting of one edge
and, for any $Q\in\cQ$, letting
\[
	M_{\cQ}=\cl_Q(M_{\cQ\setminus\{Q\}})\,.
\]
In the graph case $k=2$, we obtain $M_{\cQ}=C_4$ for $\cQ=\{\{1\},\{2\}\}$ and, for general $k\geq 2$, the hypergraphs $M_{\cQ}$ for 
$\cQ=\binom{[k]}{1}$ 
were shown to be minimisers for $\DISCQd$ in~\cite{CHPS}. Similarly, for $\cQ=\binom{[k]}{k-1}$, the hypergraphs~$M_{\cQ}$ are the $k$-uniform octahedra, i.e., complete 
$k$-partite $k$-uniform hypergraphs with vertex classes of size two, that appeared in the work of Chung and Graham~\cite{ChuGra90hyper}
and Kohayakawa, R\"odl, and Skokan~\cite{KRS02}.

It follows from these definitions that $M_{\cQ}$ consists of $2^{|\cQ|}$ hyperedges and 
$\sum_{i=1}^k2^{|\cQ|-\deg_{\cQ}(i)}$ vertices, where $\deg_{\cQ}(i)$ denotes the number of sets 
of $\cQ$ containing the element~$i$. An appropriate sequence of applications of the Cauchy--Schwarz inequality, one for 
each $Q\in\cQ$, shows that every $k$-uniform hypergraph $H$ on $n$ vertices with density $d>0$
contains at least $(d^{e(M_\cQ)}-o(1))n^{v(M_\cQ)}$ labeled copies of $M_{\cQ}$. The analogue of 
property~\eqref{eq:cycles} which we will show is equivalent to $\DISCQd$ is now as follows.

\begin{definition}[$\MINQd$]
For an integer $k\geq 2$, a set system $\cQ\subseteq \powerset([k])$, and
reals $\eps>0$ and $d\in[0,1]$,
we say that a $k$-uniform hypergraph $H=(V,E)$
with $|V|=n$ satisfies $\MINQdeps$ if 
\begin{enumerate}[label=\rmlabel]
	\item the density $d(H)=|E|/\binom{n}{k}$ satisfies $d(H)\geq d-\eps$ and
	\item the number $N_{M_{\cQ}}(H)$ of labeled copies of $M_{\cQ}$ in $H$ satisfies
	\[
		N_{M_{\cQ}}(H)\leq(d^{e(M_\cQ)}+\eps)n^{v(M_\cQ)}\,.
	\]
\end{enumerate}
\end{definition}

It is sometimes more convenient to work with the following  weighted version of $\MINQd$.

\begin{definition}[$\DEVQd$]
For an integer $k\geq 2$, a set system $\cQ\subseteq \powerset([k])$, and
reals $\eps>0$ and $d\in[0,1]$,
we say that a $k$-uniform hypergraph $H=(V,E)$
with $|V|=n$ satisfies $\DEVQdeps$ if 
\[
	\sum_M\prod_{f\in E(M)}(\indi_E(f)-d)\leq\eps n^{v(M_\cQ)}\,,
\]
where the sum ranges over all labeled copies $M$ of $M_\cQ$ in the complete $k$-uniform hypergraph~$K^{(k)}_V$ 
on the vertex set $V$. 
\end{definition}

\subsection{Main results}\label{sec:main}
For a property $P_{x_1,\dots,x_\ell}(\alpha_1,\dots,\alpha_r)$ of $k$-uniform hypergraphs, we say a {\sl sequence} of $k$-uniform hypergraphs $(H_n)_{n \in \Nat}$ satisfies $P_{x_1,\dots,x_\ell}$ if, for each choice of the parameters $\alpha_1,\dots,\alpha_r$ all but finitely many hypergraphs $H_n$ satisfy $P_{x_1,\dots,x_\ell}(\alpha_1,\dots,\alpha_r)$. Moreover, given two properties $P_{x_1,\dots,x_\ell}$ and $Q_{y_1,\dots,y_p}$, we say that $P_{x_1,\dots,x_\ell}$ \emph{implies} $Q_{y_1,\dots,y_p}$ and write 
\[
	P_{x_1,\dots,x_\ell} \implies Q_{y_1,\dots,y_p}
\]
if every sequence $(H_n)_{n \in \Nat}$ that satisfies $P_{x_1,\dots,x_\ell}$ also satisfies $Q_{y_1,\dots,y_p}$. 
Our main result is then the following. 

\begin{theorem}[Main result]\label{thm:main} For every $k \geq 2$, every set system $\cQ \subseteq \powerset([k])\sm \{[k]\}$, and $d \in [0,1]$, the properties $\DISCQd$, $\WDISCQd$, $\CLQd$, and $\DEVQd$ are all equivalent.
\end{theorem}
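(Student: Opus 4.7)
The plan is to close the cycle of implications $\WDISCQd \implies \CLQd \implies \DEVQd \implies \WDISCQd$, together with the separate equivalence $\DISCQd \iff \WDISCQd$. For the latter, $\WDISCQd \implies \DISCQd$ is the observation already recorded after Definition~\ref{def:wdisc}. For $\DISCQd \implies \WDISCQd$, I would decompose each $w_Q\colon V^Q \to [-1,1]$ as $w_Q = w_Q^+ - w_Q^-$ with $w_Q^\pm \in [0,1]$ and use the layer-cake expansion $w_Q^\pm(\bv_Q) = \int_0^1 \indi_{\{w_Q^\pm \ge t\}}(\bv_Q)\,dt$; each level set is the edge set of a $Q$-directed hypergraph, so integrating over $[0,1]^{\cQ}$ turns the weighted discrepancy sum into an iterated integral of quantities of the form $|\arrow E \cap \cK_k(\cG)| - d\,|\cK_k(\cG)|$, each controlled by $\DISCQd$ (up to an $O_k(n^{k-1})$ error from $\bv$'s with repeated entries).

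For $\DEVQd \implies \WDISCQd$ I would run a Gowers--Cauchy--Schwarz argument. Writing $g(\bv) = \indi_{\arrow E}(\bv) - d$ and enumerating $\cQ = \{Q_1, \dots, Q_t\}$, the $j$-th application of Cauchy--Schwarz separates the sum by coordinates in $Q_j$ versus those in $[k] \setminus Q_j$, absorbs $w_{Q_j}$ using $\|w_{Q_j}\|_\infty \le 1$, and duplicates the coordinates in $[k] \setminus Q_j$; this is exactly the $Q_j$-doubling operation. After all $t = |\cQ|$ steps, the surviving sum runs over labeled copies of $M_\cQ$, giving an inequality of the form
\[
\Bigl|\sum_{\bv \in V^{[k]}} g(\bv)\cW(\bv)\Bigr|^{2^{|\cQ|}} \le n^{C(k,\cQ)} \sum_M \prod_{f \in E(M)} g(f),
\]
where the right-hand sum is nonnegative (unwinding the doubling recursion displays it as an iterated sum of squares) and is precisely the quantity bounded by $\DEVQd$; hence a suitably small $\DEVQd$ parameter yields $\WDISCQd$.

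The counting step $\WDISCQd \implies \CLQd$ proceeds by telescoping along the $\cQ$-simple ordering $f_1, \dots, f_m$ of $E(F)$. Expanding $N_F(H) = \sum_\phi \prod_i \bigl(d + (\indi_E(\phi(f_i))-d)\bigr)$ into $2^m$ terms indexed by $S \subseteq [m]$, the term $S = \emptyset$ contributes $d^m n^{v(F)} + O(n^{v(F)-1})$. For each nonempty $S$ I would let $i^* = \max S$, fix the vertex ordering of $f_{i^*}$ witnessing $\cQ$-simplicity, and fix the vertex images of all vertices outside $f_{i^*}$. By Definition~\ref{def:Qsimple}, for every $h \in S$ with $h < i^*$ the set $f_h \cap f_{i^*}$, read through that ordering, is contained in some $Q_h \in \cQ$. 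The remaining sum over the image $\bv \in V^{[k]}$ of $f_{i^*}$ then takes the form $\sum_{\bv}(\indi_{\arrow E}(\bv) - d)\cW(\bv)$ for an ensemble $\cW = (w_Q)_{Q \in \cQ}$, where each $w_Q$ is the product of the factors $\indi_E(\phi(f_h))-d$ (each viewed as a function on $V^{Q}$) assigned to $Q$ under some fixed choice $Q \supseteq Q_h$ in $\cQ$; since each factor lies in $[-1,1]$, $\|w_Q\|_\infty \le 1$. Applying $\WDISCQd$ bounds this inner sum by $\eps n^k$, and summing over the $O(n^{v(F)-k})$ configurations of the external vertices gives the required $O(\eps n^{v(F)})$ error.

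Finally, $\CLQd \implies \DEVQd$ follows by expanding $\prod_f (\indi_E(f) - d) = \sum_{S \subseteq E(M_\cQ)} (-d)^{e(M_\cQ) - |S|} \prod_{f \in S} \indi_E(f)$ and summing over labeled copies $M$: each summand counts labeled embeddings into $H$ of the sub-hypergraph $M_\cQ[S]$, which inherits the $\cQ$-simple ordering of $M_\cQ$. Hence $\CLQd$ controls each summand and the signed sum collapses to $O(\eps n^{v(M_\cQ)})$. I expect the main obstacle to be the counting step $\WDISCQd \implies \CLQd$: the delicate point is verifying that the $\cQ$-simple ordering lets the accumulated deviation factors distribute cleanly into a valid ensemble $(w_Q)_{Q \in \cQ}$ with $\|w_Q\|_\infty \le 1$, which is precisely what Definition~\ref{def:Qsimple} is engineered to enable.
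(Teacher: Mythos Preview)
Your proposal is correct and follows essentially the same cycle of implications as the paper, with the same core arguments at each step: the Gowers--Cauchy--Schwarz doubling for $\DEVQd \Rightarrow \WDISCQd$, the $\cQ$-simple edge-by-edge telescoping for $\WDISCQd \Rightarrow \CLQd$, and an algebraic expansion for $\CLQd \Rightarrow \DEVQd$. The only cosmetic differences are that the paper handles $\DISCQd \Rightarrow \WDISCQd$ via a random-realisation argument (choosing each edge of $G_Q$ with probability $w_Q^\pm$) rather than your layer-cake integral, and it routes $\CLQd \Rightarrow \DEVQd$ through induced counts $N^*_{F',M_\cQ}$ and inclusion--exclusion rather than your direct binomial expansion of $\prod_f(\indi_E(f)-d)$; both variants are standard and yield the same bounds.
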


\noindent
In Section~\ref{sec:main-proof}, we will prove Theorem~\ref{thm:main} by establishing the chain of implications
\begin{equation}\label{eq:plan}
	 \DISCQd \Longrightarrow\WDISCQd \Longrightarrow \CLQd \Longrightarrow \DEVQd \Longrightarrow \WDISCQd  \Longrightarrow \DISCQd,
\end{equation}
where the last implication was already discussed after Definition~\ref{def:wdisc} above. One could also add $\MINQd$ to the list of equivalent properties in our  main result. Indeed, it is clear that $\CLQd \Longrightarrow \MINQd$. While the opposite implication also holds, we have chosen to omit the rather technical proof here. As well as the work of Towsner~\cite{Towsner}, where Theorem~\ref{thm:main} first appears, we refer the interested reader to~\cite{CJ}*{Lemma~5.8}, where the equivalence between $\DEVQd$ and $\WDISCQd$ is also proven as part of a broad spectrum of results about equivalences between different hypergraph norms.

While we will always work with general set systems, we follow Towsner in noting that antichains already capture the essence of the definitions above.
We briefly review this point. To begin, note that for any $k \geq 2$ and $\cQ \subseteq \powerset([k])$, there is a unique  antichain $\cA(\cQ) \subseteq \cQ$ with the property that 
\begin{equation}\label{eq:anti}
\mbox{for each $Q \in \cQ$ there exists $A \in \cA(\cQ)$ with $Q \subseteq A$.}
\end{equation}
In fact, $\cA(\cQ)$ consists of the inclusion maximal elements from $\cQ$. 
Note now, by~\eqref{eq:anti}, that the set of $\cA(\cQ)$-simple $k$-uniform hypergraphs coincides with the set of $\cQ$-simple $k$-uniform hypergraphs, so that $\CL_{\cA(\cQ),d}$ and $\CLQd$ define the same notion. Therefore, by Theorem~\ref{thm:main}, it follows that~$\cA(\cQ)$ and $\cQ$ define the same notion of quasirandomness.

\begin{observation}\label{obs:anti}
For every $k \geq 2$, $d \in [0,1]$, and $\cQ \subseteq \powerset([k])$, we have $\DISCQd \Longleftrightarrow \DISC_{\cA(\cQ),d}$.
\end{observation}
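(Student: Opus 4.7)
The plan is to prove both directions of the equivalence directly, using the fact that the family $\cK_k(\cG)$ of supported ordered $k$-tuples is invariant under certain natural manipulations of the sequence~$\cG$. The two key moves are (a) attaching a ``complete'' $Q$-directed hypergraph $G_Q$ with $E(G_Q)=V^Q$, which imposes no constraint at all on supported tuples, and (b) replacing several $Q$-directed hypergraphs $G_Q$ indexed by sets $Q$ lying below a common $A \in \cA(\cQ)$ by a single $A$-directed hypergraph $G'_A$ obtained by intersecting their ``lifts''.

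For the forward implication $\DISCQd \Longrightarrow \DISC_{\cA(\cQ),d}$, I would take an arbitrary test family $\cG'=(G'_A)_{A\in\cA(\cQ)}$ on the vertex set $V$ and extend it to a family $\cG=(G_Q)_{Q\in\cQ}$ by setting $G_Q=G'_Q$ when $Q\in\cA(\cQ)$ and $E(G_Q)=V^Q$ otherwise. Since adding the complete $Q$-directed hypergraph puts no restriction on supported tuples, one has $\cK_k(\cG)=\cK_k(\cG')$ on the nose, and the discrepancy bound guaranteed by $\DISCQd$ applied to $\cG$ becomes the desired bound for $\cG'$.

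For the reverse implication $\DISC_{\cA(\cQ),d} \Longrightarrow \DISCQd$, I would go the other way: given $\cG=(G_Q)_{Q\in\cQ}$, define an $\cA(\cQ)$-sequence $\cG'=(G'_A)_{A\in\cA(\cQ)}$ by declaring
\[
	E(G'_A)=\bigl\{\bu\in V^A \,:\, \bu_Q\in E(G_Q)\text{ for every }Q\in\cQ\text{ with }Q\subseteq A\bigr\}.
\]
Then $\bv\in\cK_k(\cG')$ iff $\bv_A\in E(G'_A)$ for every $A\in\cA(\cQ)$, iff $\bv_Q\in E(G_Q)$ for every $Q\in\cQ$ contained in some $A\in\cA(\cQ)$. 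By~\eqref{eq:anti}, the latter quantification is exactly ``for every $Q\in\cQ$'', so $\cK_k(\cG')=\cK_k(\cG)$, and again the discrepancy quantities for the two sides coincide, yielding the bound claimed by $\DISCQd$.

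There is essentially no obstacle here: both constructions preserve $\cK_k$ exactly, so each implication is witnessed by an equality of discrepancy quantities rather than an approximation. An alternative, less elementary, route would be to invoke Theorem~\ref{thm:main} together with the already observed identity between the classes of $\cQ$-simple and $\cA(\cQ)$-simple hypergraphs (so that $\CLQd$ and $\CL_{\cA(\cQ),d}$ coincide as properties), but the direct argument above is shorter and does not rely on the main theorem.
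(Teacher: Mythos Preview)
Your direct argument is correct: both constructions preserve $\cK_k(\cdot)$ exactly, so the discrepancy quantities coincide and each implication holds with the same~$\eps$.

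However, this is not the route the paper takes. The paper deduces Observation~\ref{obs:anti} from Theorem~\ref{thm:main}: since, by~\eqref{eq:anti}, the class of $\cQ$-simple hypergraphs equals the class of $\cA(\cQ)$-simple hypergraphs, the properties $\CLQd$ and $\CL_{\cA(\cQ),d}$ are literally identical, and then the equivalence $\DISCQd\Longleftrightarrow\CLQd$ from Theorem~\ref{thm:main} transports this to $\DISCQd\Longleftrightarrow\DISC_{\cA(\cQ),d}$. You yourself flag this as an alternative in your last paragraph. What your approach buys is independence from the main theorem (so no circularity concern and no dependence on the whole chain~\eqref{eq:plan}), an exact rather than asymptotic identification, and a proof that works verbatim even when $[k]\in\cQ$, a case Theorem~\ref{thm:main} formally excludes. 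What the paper's approach buys is economy of exposition: once Theorem~\ref{thm:main} is in hand, the observation is a one-line corollary via $\CLQd$.
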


Observation~\ref{obs:anti} is in fact a special case of a broader principle. Given two set systems $\cA$, $\cB \subseteq \powerset([k])$, write $\cA \preceq \cB$ if there exists a bijection $\varphi\colon [k] \to [k]$ such that for every $A \in \cA$ the set~$\varphi(A)= \{\varphi(a)\colon a\in A\}$ is contained in the downset generated by $\cB$. Note that if $\cA \preceq \cB$ then the $\cA$-simple $k$-uniform hypergraphs are a subset of the $\cB$-simple $k$-uniform hypergraphs. This then yields the following observation.

\begin{observation}\label{obs:anti2}
For every $k \geq 2$, $d \in [0,1]$, and $\cA, \cB \subseteq \powerset([k])$ with $\cA \preceq \cB$, we have $\DISC_{\cB,d} \Longrightarrow \DISC_{\cA,d}$. 
\end{observation}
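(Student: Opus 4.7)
The plan is to convert, without any loss, every $\cA$-directed witness for $\DISC_{\cA,d}$ into a $\cB$-directed witness with the same set of supported $k$-tuples (up to a fixed coordinate permutation). Since $\arrow E$ is the set of all orderings of the edges of $H$ and is therefore invariant under permutations of $[k]$, the counts $|\arrow E\cap\cK_k(\cdot)|$ and $|\cK_k(\cdot)|$ will match on the nose, and so $\DISC_{\cB,d}(\eps)$ will transfer to $\DISC_{\cA,d}(\eps)$ with the same $\eps$, whereupon the sequence-level implication is immediate.

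In more detail, let $\varphi\colon[k]\to[k]$ be the bijection supplied by $\cA\preceq\cB$, and for each $A\in\cA$ fix some $B(A)\in\cB$ with $\varphi(A)\subseteq B(A)$. Given an arbitrary $\cA$-directed sequence $\cG^{\cA}=(G_A)_{A\in\cA}$ on the vertex set $V$ of $H$, I would define a $\cB$-directed sequence $\cG^{\cB}=(G_B)_{B\in\cB}$ on $V$ by declaring, for $\bu=(u_i)_{i\in B}\in V^B$, that $\bu\in E(G_B)$ iff $(u_{\varphi(a)})_{a\in A}\in E(G_A)$ for every $A\in\cA$ with $B(A)=B$ (and setting $G_B:=V^B$ if no such $A$ exists). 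This is well posed precisely because $\varphi(A)\subseteq B(A)$ ensures that the required coordinates of $\bu$ are available.

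Next I would verify that the coordinate permutation $\bu\mapsto\bv$ on $V^{[k]}$ given by $v_a:=u_{\varphi(a)}$, a bijection since $\varphi$ is, maps $\cK_k(\cG^{\cB})$ bijectively onto $\cK_k(\cG^{\cA})$: unwinding the definitions, $\bu$ is supported by $\cG^{\cB}$ exactly when, for every $A\in\cA$, the tuple $\bv_A=(u_{\varphi(a)})_{a\in A}$ lies in $E(G_A)$, which is the condition that $\bv$ is supported by $\cG^{\cA}$. By the symmetry of $\arrow E$ under coordinate permutations, $\bu\in\arrow E$ iff $\bv\in\arrow E$, so the bijection further restricts to a bijection $\arrow E\cap\cK_k(\cG^{\cB})\to\arrow E\cap\cK_k(\cG^{\cA})$. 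Hence the two discrepancy quantities agree, and $\DISC_{\cB,d}(\eps)$ applied to $\cG^{\cB}$ yields the same bound for $\cG^{\cA}$, proving $\DISC_{\cA,d}(\eps)$.

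There is no serious obstacle here. The only minor bookkeeping point is that several $A\in\cA$ may be assigned the same $B(A)\in\cB$, which is why the defining condition of $G_B$ takes the conjunction of all corresponding sub-tuple constraints rather than a single one. In particular, the argument involves no Cauchy--Schwarz, averaging, or regularity; it is a clean bijective transfer between witness families and does not degrade $\eps$.
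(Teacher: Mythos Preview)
Your argument is correct, but it differs from the paper's route. The paper deduces the observation in one line from the sentence immediately preceding it: since $\cA\preceq\cB$ implies that every $\cA$-simple hypergraph is $\cB$-simple, one has $\CL_{\cB,d}\Longrightarrow\CL_{\cA,d}$, and then the equivalence $\DISCQd\Longleftrightarrow\CLQd$ from Theorem~\ref{thm:main} closes the loop. Your proof instead works directly at the level of witnesses, building from any $\cA$-sequence $\cG^{\cA}$ a $\cB$-sequence $\cG^{\cB}$ whose supported tuples correspond bijectively (via the coordinate permutation induced by $\varphi$) to those of $\cG^{\cA}$, and then exploiting the permutation-invariance of $\arrow E$.

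The trade-off is clear: the paper's argument is a single sentence but leans on the full strength of the main theorem, whereas your argument is self-contained, elementary, and in fact shows the sharper statement that $\DISC_{\cB,d}(\eps)$ implies $\DISC_{\cA,d}(\eps)$ with no loss in $\eps$. Your bookkeeping remark about multiple $A$'s being assigned the same $B(A)$ is exactly right and is what makes the ``iff'' in the bijection step go through.
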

As previously mentioned, Towsner~\cite{Towsner}*{Section~9}, generalising ideas of Lenz and Mubayi~\cite{LM15b}, provided constructions of hypergraphs that distinguish the various notions of hypergraph quasi\-randomness defined above. We do the same. 
Our construction is essentially that of Towsner, with the distinction between Towsner's work and ours being in the analysis of the construction. In particular, our approach uses only some simple applications of the Chernoff and Chebyshev inequalities. 

For a simpler presentation we focus on the special case of distinguishing $\DISC_{\cQ,1/2}$ 
from $\DISC_{\cU,1/2}$, where both $\cQ$, $\cU \subseteq \binom{[k]}{i}$ are comprised only of $i$-sets for some $1 \leq i <k$ and 
$\cU \subsetneq \cQ$.
The analysis for densities other than $1/2$ and for more general set systems $\cQ$ and $\cU$ follows along similar lines, but would require somewhat more technical notation.

\begin{proposition}\label{lem:construction}
For every $1 \leq i < k$ and $\cU\subsetneq\cQ\subseteq \binom{[k]}{i}$ there exists $\delta > 0$ such that for every $\eps > 0 $ 
there is a sequence of hypergraphs $\cH = (H_n)_{n \in \N}$ which satisfies $\DISC_{\cU,1/2}(\eps)$, but fails to satisfy $\DISC_{\cQ,1/2}(\delta)$.
\end{proposition}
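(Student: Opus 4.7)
The plan is to construct a random hypergraph sequence $(H_n)_{n\in\NN}$ that carries a planted $Q^*$-structured bias for some fixed $Q^*\in\cQ\setminus\cU$, and to verify $\DISC_{\cU,1/2}(\eps)$ through the equivalence $\DISC_{\cU,1/2}\Leftrightarrow\DEV_{\cU,1/2}$ supplied by Theorem~\ref{thm:main}, thereby avoiding a futile union bound over the $2^{\Omega(n^i)}$ possible $\cU$-witnesses. Identifying $V=[n]$, I would draw a uniformly random $\chi\colon\binom{V}{i}\to\{0,1\}$ and declare $e\in E(H_n)$ iff $\chi(T^*(e))=1$, where $T^*(e)\in\binom{e}{i}$ is the $i$-subset consisting of the vertices of $e$ sitting in the positions indexed by $Q^*$ in the increasing ordering of $e$. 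A routine second-moment estimate then shows $|E(H_n)|=\tfrac12\binom{n}{k}+o(n^k)$ with high probability.

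Failure of $\DISC_{\cQ,1/2}$ at a universal level $\delta=\delta(k,i)>0$ is witnessed directly. Let $G_{Q^*}$ be the $Q^*$-directed hypergraph consisting of those injective tuples whose underlying $i$-set lies in $\chi^{-1}(1)$, and set $G_Q:=V^Q$ for the remaining $Q\in\cQ$. Grouping the ordered tuples $\bv\in\binom{V}{k}_<$ according to whether the $i$-set underlying $\bv_{Q^*}$ coincides with $T^*(\{\bv\})$, a direct expansion yields
\[
\E\Bigl[\,\bigl|\arrow E\cap\cK_k(\cG)\bigr|-\tfrac12\bigl|\cK_k(\cG)\bigr|\,\Bigr]\;=\;\binom{n}{k}\frac{i!(k-i)!}{4}\;\geq\;\delta\, n^k,
\]
and a Chernoff bound applied to the independent coordinates of $\chi$ keeps this quantity within $o(n^k)$ of its expectation with probability $1-o(1)$.

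To verify $\DISC_{\cU,1/2}(\eps)$ I would instead control the single scalar
\[
X\;:=\;\sum_M\prod_{f\in E(M)}\bigl(\indi_{E(H_n)}(f)-\tfrac{1}{2}\bigr)\;=\;\sum_M\prod_{f\in E(M)}\eta\bigl(T^*(f)\bigr),
\]
where $\eta(T):=\chi(T)-\tfrac12\in\{\pm\tfrac12\}$ are i.i.d.\ and $M$ ranges over labelled copies of $M_\cU$ in $K^{(k)}_V$. The expectation of each monomial factors as $\prod_T\E\eta(T)^{m_T(M)}$ with $m_T(M):=|\{f\in E(M)\colon T^*(f)=T\}|$, and vanishes unless every $m_T(M)$ is even. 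The main obstacle I anticipate is showing that, since $Q^*\notin\cU$, each doubling $\cl_U$ in the recursive construction of $M_\cU$ pairs edges that agree only on their $U$-positions, so their canonical $Q^*$-footprints coincide only on a codimension-$\geq 1$ subvariety of vertex labellings; a careful count then bounds the number of $M$'s with every $m_T(M)$ even by $O(n^{v(M_\cU)-1})$, yielding $|\E X|=o(n^{v(M_\cU)})$, while a parallel second-moment argument gives $\mathrm{Var}(X)=o(n^{2v(M_\cU)})$. Chebyshev's inequality then supplies $|X|\leq\eps n^{v(M_\cU)}$ with probability $1-o(1)$, and via Theorem~\ref{thm:main} this establishes $\DISC_{\cU,1/2}(\eps)$, completing the proof.
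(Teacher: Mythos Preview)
Your construction does not satisfy $\DISC_{\cU,1/2}(\eps)$ for small $\eps$, and the specific claim that the number of labelled copies $M$ of $M_\cU$ with every $m_T(M)$ even is $O(n^{v(M_\cU)-1})$ is false. The mistake is in identifying the $k$-partite class index with the position in the natural ordering: $T^*(f)$ is determined by the \emph{increasing} order of the vertices of $f$, so two edges paired by a doubling $\cl_U$ can very well have the same $T^*$-footprint whenever their differing vertices land outside the positions of $Q^*$ in the natural order.

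Concretely, take $k=3$, $i=2$, $\cU=\{\{1,2\}\}$, $Q^*=\{2,3\}$. Then $M_\cU$ has four vertices $a,b,c_0,c_1$ and two edges $f_0=\{a,b,c_0\}$, $f_1=\{a,b,c_1\}$. Since $T^*(f)$ is the set of the two largest elements of $f$, whenever $\phi(c_0),\phi(c_1)<\min(\phi(a),\phi(b))$ we get $T^*(\phi(f_0))=T^*(\phi(f_1))=\{\phi(a),\phi(b)\}$, and there are $4\binom{n}{4}=\Theta(n^4)=\Theta(n^{v(M_\cU)})$ such labellings. Hence
\[
\E X \;=\; \tfrac14\,\bigl|\{M:\ T^*(f_0)=T^*(f_1)\}\bigr|\;=\;\binom{n}{4}+O(n^3)\;\geq\;\tfrac{1}{25}\,n^{v(M_\cU)},
\]
and a bounded-differences argument (each $\chi(T)$ affects $X$ by $O(n^{v(M_\cU)-i})$) gives $X=\E X+o(n^{v(M_\cU)})$ with high probability. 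Thus your $H_n$ \emph{fails} $\DEV_{\cU,1/2}(\eps)$ for $\eps<1/25$, and by Theorem~\ref{thm:main} it fails $\DISC_{\cU,1/2}$ as well. The same phenomenon already occurs for graphs: with $k=2$, $\cU=\{\{1\}\}$, $Q^*=\{2\}$, your edge rule is ``$\{u,v\}\in E$ iff $\chi(\max(u,v))=1$'', and the two edges of a cherry $M_\cU$ centred at the largest vertex have identical $T^*$-footprint.

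The paper avoids this by defining membership in $H_n$ via the \emph{full} parity $p_{\bv}=|\{Q\in\cQ:\bv_Q^{(\mathrm{nat})}\in\arrow\cB\}|$ over all $Q\in\cQ$, not just $Q^*$; the presence of $Q^*\notin\cU$ then lets one ``peel off'' the $Q^*$-coordinate and reduce the verification of $\DISC_{\cU,1/2}$ to the lower-uniformity property~\eqref{eq:meet} of $\cB$, which \emph{can} be established by a union bound (there are only $2^{O(n^{i-1})}$ witnesses at uniformity $i-1$). Your idea of certifying $\DISC_{\cU,1/2}$ through $\DEV_{\cU,1/2}$ via Theorem~\ref{thm:main} is reasonable in principle, but it requires a construction for which $\E X=o(n^{v(M_\cU)})$, and the single-coordinate construction does not have this property.
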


We present the proof of Proposition~\ref{lem:construction} in Section~\ref{sec:distinguish} and in the next section we give the details of the proof of Theorem~\ref{thm:main}.

\section{Equivalences of quasirandom properties}\label{sec:main-proof}

In this section, we prove Theorem~\ref{thm:main} by following the plan set out in~\eqref{eq:plan}. 

\subsection{\texorpdfstring{$\mbs{\DISCQd \Longrightarrow \WDISCQd}$}{DISC implies WDISC}}
Our proof of the implication $\DISCQd\implies\WDISCQd$ is an adaptation of an argument of Gowers~\cite{Gowers06}*{Section~3}. 

\begin{lemma}\label{lem:disc_to_wdisc}
For every $k \geq 2$, every set system $\cQ \subseteq \powerset([k]) \setminus \{[k]\}$, every $d \in [0,1]$, and every $\delta>0$, there exists an $\eps >0$ such that, for all sufficiently large $n$, if $H = (V,E)$ is an $n$-vertex $k$-uniform hypergraph satisfying $\DISCQd(\eps)$, then $H$ satisfies $\WDISCQd(\delta)$. 
\end{lemma}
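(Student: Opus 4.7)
\medskip\noindent\textbf{Proof plan.}
The plan is to run Gowers' averaging argument: replace each weight function $w_Q$ by a family of indicator functions of $Q$-directed hypergraphs, so that $\DISCQd$ may be applied uniformly over this family.

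First, I reduce to nonnegative weights. Writing $w_Q^+:=\max(w_Q,0)$ and $w_Q^-:=\max(-w_Q,0)$, so that $w_Q=w_Q^+-w_Q^-$ with $w_Q^\pm\colon V^Q\to[0,1]$, the product $\cW(\bv)=\prod_{Q\in\cQ}\bigl(w_Q^+(\bv_Q)-w_Q^-(\bv_Q)\bigr)$ expands as an alternating sum of $2^{|\cQ|}$ products $\prod_{Q\in\cQ}w_Q^{\sigma(Q)}(\bv_Q)$ indexed by sign patterns $\sigma\in\{+,-\}^\cQ$. By the triangle inequality, it suffices to bound each of these $2^{|\cQ|}$ summands by $\delta n^k/2^{|\cQ|}$.

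Second, I apply the layer-cake identity to each nonnegative weight. For any $[0,1]$-valued $u\colon V^Q\to[0,1]$ one has $u(\bv_Q)=\int_0^1\indi_{\{u\ge t\}}(\bv_Q)\,dt$, and the level set $G_Q^t:=\{\bu\in V^Q:u(\bu)\ge t\}$ is a $Q$-directed hypergraph on $V$. Applying this to each $w_Q^{\sigma(Q)}$ and using Fubini, the corresponding summand becomes
\[
\int_{[0,1]^\cQ}\Bigg(\sum_{\bv\in V^{[k]}}\bigl(\indi_{\arrow E}(\bv)-d\bigr)\prod_{Q\in\cQ}\indi_{G_Q^{t_Q}}(\bv_Q)\Bigg)\,d\mbs t\,,
\]
where $G_Q^{t_Q}$ denotes the corresponding level set of $w_Q^{\sigma(Q)}$.

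Third, I apply $\DISCQd(\eps)$ pointwise in $\mbs t$. By the observation made right after Definition~\ref{def:wdisc}, for the ensemble $\cG^{\mbs t}=(G_Q^{t_Q})_{Q\in\cQ}$ the inner sum differs from $|\arrow E\cap\cK_k(\cG^{\mbs t})|-d\,|\cK_k(\cG^{\mbs t})|$ by at most $O_k(n^{k-1})$ (accounting for $\bv\in V^{[k]}$ with repeated entries that are nevertheless $\cG^{\mbs t}$-supported). By $\DISCQd(\eps)$, the inner sum is therefore bounded in absolute value by $\eps n^k+O_k(n^{k-1})$ \emph{uniformly} in $\mbs t$. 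Since $[0,1]^\cQ$ has measure $1$, the integral inherits the same bound. Summing over the $2^{|\cQ|}$ sign patterns yields
\[
\Bigg|\sum_{\bv\in V^{[k]}}\bigl(\indi_{\arrow E}(\bv)-d\bigr)\cW(\bv)\Bigg|\le 2^{|\cQ|}\bigl(\eps n^k+O_k(n^{k-1})\bigr)\,.
\]
Choosing $\eps:=\delta/2^{|\cQ|+1}$ and $n$ large enough that the $O_k(n^{k-1})$ contribution is at most $\delta n^k/2$ finishes the argument.

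There is no serious obstacle: the proof is really a bookkeeping exercise combining a sign decomposition, a layer-cake representation, and a uniform application of $\DISCQd$. The only point requiring any care is the $O_k(n^{k-1})$ discrepancy between $\sum_{\bv\in V^{[k]}}$ and $\sum_{\bv\in\cK_k(\cG^{\mbs t})}$, which is handled verbatim by the remark already recorded after Definition~\ref{def:wdisc}.
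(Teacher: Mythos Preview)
Your proof is correct and follows the same overall plan as the paper: split each $w_Q$ into nonnegative parts, reduce to $[0,1]$-valued weights, then average over indicators to invoke $\DISCQd$, with the same choice $\eps=\delta/2^{|\cQ|+1}$ and the same $O_k(n^{k-1})$ correction for tuples with repeated entries.

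The one genuine difference is in how the averaging is carried out. The paper uses a probabilistic argument: for each $Q$, it builds a \emph{random} $Q$-directed hypergraph by including each tuple $\bf$ independently with probability $s_Q(\bf)$, observes that the weighted sum equals the expectation of the unweighted discrepancy over this random family, and then picks a single realisation achieving at least the expectation. You instead use the deterministic layer-cake identity $u=\int_0^1\indi_{\{u\ge t\}}\,dt$ and Fubini, bounding the integrand uniformly. Both are standard devices for passing from weighted to unweighted discrepancy; yours is arguably cleaner in that it avoids any appeal to probability and gives an explicit family $(\cG^{\mbs t})_{\mbs t\in[0,1]^\cQ}$ over which the bound is applied, while the paper's random-sampling version is the one Gowers originally used and makes the ``there exists a witnessing $\cG$'' conclusion immediate. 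Either way, the content is the same.
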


\begin{proof}
Given $k$, $d$, $\delta$ and $\cQ = \{Q_1,\ldots,Q_\ell\}$, we set
\begin{equation}\label{eq:disc->wdisc}
\eps = \frac{\delta}{2^{|\cQ|+1}}\,.
\end{equation}
Let $H = (V,E)$ be an $n$-vertex $k$-uniform hypergraph satisfying $\DISCQd(\eps)$ and assume, for the sake of contradiction, that $H$ does not satisfy $\WDISCQd(\delta)$. 
Then there exists a collection of functions $\left( w_Q \colon \VQ{Q} \to [-1,1]\right)_{Q \in \cQ}$ such that 
\[
\left| \sum_{\bv \in V^{[k]}}\left(\indi_{\arrow E}(\bv) -d \right)\prod_{Q \in \cQ} w_Q(\bv_Q)\right|  >  \delta n^k\,.
\]
By writing $w_Q = w_Q^+ - w_Q^-$ for each $Q \in \cQ$, where $w_Q^+$ and $w_Q^-$ are both of the form $V^Q \to [0,1]$, we see that there are $|\cQ|$ functions $s_1,\ldots,s_{\ell}$ with $s_i\in\{w_{Q_{i}}^+,w_{Q_{i}}^-\}$ for every $i \in [\ell]$, such that
\begin{equation}\label{eq:expectation}
\left| \sum_{\bv \in V^{[k]}}\left(\indi_{\arrow E}(\bv) -d \right)\prod_{i=1}^{\ell}s_i(\bv_{Q_i})\right|  > 2^{-|\cQ|} \delta n^k \overset{\eqref{eq:disc->wdisc}}{=} 2\eps n^k.
\end{equation}

Let $\cF=(F_Q)_{Q\in\cQ} = (F_{Q_i})_{i \in [\ell]}$ be the family of random directed hypergraphs where $F_{Q_i}$ is the random $Q_{i}$-directed hypergraph where every possible edge $\bf \in V^{Q_i}$ is placed in $F_{Q_i}$ with probability $s_i(\bf)$ (as usual, we take $s_i(\bf)=0$ if $\bf$ has some identical entries).   
Let $U \subseteq V^{[k]}$ denote the random subset of $V^{[k]}$ where $\bv$ is in $U$ if the set $\bv_Q\in E(F_Q)$ for all $Q\in \cQ$. By the definition of~$\cF$, the probability that $\bv$ is in $U$ is given by $\prod_{i=1}^{\ell}s_i(\bv_{Q_i})$. The left-hand side of~\eqref{eq:expectation} under the absolute value is then the expectation of the random variable $X=\sum_{\bv\in U}\big(\indi_{\arrow E}(\bv) -d \big)$.  Therefore, by~\eqref{eq:expectation}, there is a choice of set $\tilde{U}$ for which
\[
\bigg| \sum_{\bv \in \tilde{U}}\left(\indi_{\arrow E}(\bv) -d \right)\bigg|  >  2\eps n^k\,.
\]
Suppose now that $\cG=(G_Q)_{Q\in \cQ}$ is the family of directed hypergraphs from which $\tilde{U}$ is derived, that is, $\tilde{U}$ consists exactly of those $\bv$ such that $\bv_Q\in E(G_Q)$ for every $Q \in \cQ$. Then  $\cK_k(\cG) \subseteq \tilde{U}$ and $\tilde{U}\setminus \cK_k(\cG)$ contains only $k$-tuples whose entries are not distinct. Since $|\tilde{U}\setminus \cK_k(\cG)|=O_k(n^{k-1})$, we see that, for $n$ sufficiently large,
\begin{align*}
\Big|\big|\arrow E\cap \cK_k(\cG)\big|-d\,\big|\cK_k(\cG)\big|\Big| 
&= 
\bigg| \sum_{\bv \in \cK_k(\cG)}\big(\indi_{\arrow E}(\bv) -d \big)\bigg| \\
& = 
\bigg| \sum_{\bv \in \tilde{U}}\big(\indi_{\arrow E}(\bv) -d \big)\bigg| -O_k(n^{k-1})
 >  2\eps n^k-O_k(n^{k-1}) > \eps n^k\,,
\end{align*}
which contradicts our assumption that $H$ satisfies $\DISCQd(\eps)$.
\end{proof}

\subsection{\texorpdfstring{$\mbs{\WDISCQd \Longrightarrow \CLQd}$}{WDISC implies CL}}

The following lemma shows that $\WDISCQd$ yields the appropriate counting result for $\cQ$-simple hypergraphs~$F$.

\begin{lemma}\label{lem:wdisc_to_cl}
For every $k \geq 2$, every set system $\cQ \subseteq \powerset([k]) \setminus \{[k]\}$, every $d \in [0,1]$, every $\cQ$-simple $k$-uniform hypergraph $F$, and every $\delta>0$, there exists an $\eps >0$ such that, for all sufficiently large~$n$, if $H = (V,E)$ is an $n$-vertex $k$-uniform hypergraph satisfying $\WDISCQd(\eps)$, then $H$ satisfies~$\CLQd(F,\delta)$. 
\end{lemma}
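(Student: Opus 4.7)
The plan is to expand $N_F(H)$ via a telescoping argument and to control each resulting term by a single application of $\WDISCQd(\eps)$. Set $g(\bv):=\indi_{\arrow E}(\bv)-d$ for $\bv\in V^{[k]}$, fix an ordering $f_1,\ldots,f_m$ of the edges of $F$ witnessing $\cQ$-simplicity, and for each $i$ fix the ordering $\arrow{f_i}$ of $f_i$'s vertices provided by Definition~\ref{def:Qsimple}. Since the number of non-injective maps $V(F)\to V$ is $O(n^{v(F)-1})$ and each product $\prod_i\indi_{\arrow E}(\psi(\arrow{f_i}))$ is bounded by $1$, we have
\[
N_F(H)=\sum_{\psi\colon V(F)\to V}\prod_{i=1}^m\bigl(d+g(\psi(\arrow{f_i}))\bigr)+O(n^{v(F)-1}).
\]
Expanding over subsets $S\subseteq[m]$, the term $S=\emptyset$ contributes $d^{e(F)}n^{v(F)}$, so it suffices to show that $\bigl|\sum_\psi\prod_{i\in S}g(\psi(\arrow{f_i}))\bigr|=O(\eps\, n^{v(F)})$ for every non-empty $S$.

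For such an $S$, let $i^\star=\max S$, write $Y=V(F)\setminus f_{i^\star}$, and identify $\psi|_{f_{i^\star}}$ with $\bv\in V^{[k]}$ via $\arrow{f_{i^\star}}$. By $\cQ$-simplicity applied to the pair $(h,i^\star)$ for each $h\in S\setminus\{i^\star\}$, the positions of $f_h\cap f_{i^\star}$ inside $\arrow{f_{i^\star}}$ lie in some $Q_h\in\cQ$; hence the factor $g(\psi(\arrow{f_h}))$ depends on $\bv$ only through $\bv_{Q_h}$ (besides $\psi_Y$). For fixed $\psi_Y$, grouping the factors by $Q_h$ produces an ensemble $(w_Q\colon V^Q\to[-1,1])_{Q\in\cQ}$, taken identically $1$ whenever no $h$ is assigned to $Q$, such that
\[
\prod_{h\in S\setminus\{i^\star\}}g(\psi(\arrow{f_h}))=\prod_{Q\in\cQ}w_Q(\bv_Q)
\]
on every $\bv$ all of whose $\bv_Q$ are proper. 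Applying $\WDISCQd(\eps)$ to this ensemble, and absorbing the $O(n^{k-1})$ discrepancy coming from the zero convention in Definition~\ref{def:wdisc}, yields
\[
\bigl|\sum_{\bv\in V^{[k]}}g(\bv)\prod_{Q\in\cQ}w_Q(\bv_Q)\bigr|\leq 2\eps\, n^k
\]
for $n$ large, and summing over the at most $n^{v(F)-k}$ choices of $\psi_Y$ gives the desired $\bigl|\sum_\psi\prod_{i\in S}g(\psi(\arrow{f_i}))\bigr|\leq 2\eps\, n^{v(F)}$.

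Summing over the $2^m-1$ non-empty subsets $S$ yields $|N_F(H)-d^{e(F)}n^{v(F)}|\leq 2^{m+1}\eps\, n^{v(F)}+O(n^{v(F)-1})$, so taking $\eps=\delta/2^{m+2}$ and $n$ sufficiently large completes the proof. The main point to check, and hence the principal (if minor) bookkeeping obstacle, is the factorisation in the peeling step: because the ordering $\arrow{f_{i^\star}}$ is fixed once and Definition~\ref{def:Qsimple} guarantees, for every $h<i^\star$, a single $Q_h\in\cQ$ containing the positions of $f_h\cap f_{i^\star}$ within this ordering, each factor $g(\psi(\arrow{f_h}))$ can be consistently assigned to one slot $\bv_{Q_h}$. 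Multiplying the at most $m-1$ such factors of absolute value at most $1$ then yields a legitimate weight ensemble to which $\WDISCQd(\eps)$ applies, and this is exactly what $\cQ$-simplicity was designed to provide.
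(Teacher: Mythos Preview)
Your argument is correct and follows essentially the same approach as the paper: both expand $\prod_f(\indi_E-d+d)$ over subsets of edges, isolate the last edge $f_{i^\star}$ of each non-empty subset in the $\cQ$-simple ordering, fix the map on $V(F)\setminus f_{i^\star}$, and use $\cQ$-simplicity to package the remaining factors into a weight ensemble to which $\WDISCQd(\eps)$ applies. The only cosmetic differences are that the paper works with $\hom(F,H)$ and subhypergraphs $F'\subseteq F$ rather than subsets $S\subseteq[m]$, and handles the zero-convention discrepancy implicitly rather than explicitly as you do.
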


\begin{proof}
Given $k, \cQ, d, F$, and $\delta$, we set 
\[
	\eps = \frac{\delta/2}{(2^{e(F)}-1)}\qqand
	\eps' = \frac{\delta}{2}
\] 
and write $\hom(F,H)$ for the number of homomorphisms from $F$ to $H$. Note that $N_F(H)$, which is the number of injective homomorphisms, satisfies 
\[
	N_F(H)\leq \hom(F,H) \leq  N_F(H) + \eps' n^{v(F)}
\] 
for sufficiently large $n$. Indeed, there are at most $O_{v(F)}(n^{v(F)-1})$ non-injective homomorphisms from $F$ to  $H$ and this is at most $\eps' n^{v(F)}$ for $n$ sufficiently large. It will therefore suffice to prove that 
\begin{equation}\label{eq:homo-bound}
\hom(F,H) = d^{e(F)}n^{v(F)} \pm \eps' n^{v(F)}. 
\end{equation}
We  have 
\begin{equation}\label{eq:hom_trick}
	\hom(F,H) 
	= \sum_{\varphi\colon  V(F) \to V} \prod_{f \in E(F)} \indi_E(\varphi(f))
	= \sum_{\varphi\colon  V(F) \to V} \prod_{f \in E(F)} \left(\indi_E(\varphi(f))-d+d\right),
\end{equation}
where here the sum ranges over all functions $V(F) \to V$ and not just over homomorphisms. For $e \in E(H)$, write $g(e)=\indi_E(e)-d$. Multiplying out the expression $\prod_{f \in E(F)} (g(\varphi(f)) +d)$, we obtain~$2^{e(F)}$ summands, one corresponding to each subhypergraph of $F$. These summands have the form $\big(\prod_{f \in E(F')} g(\varphi(f))\big)d^{e(F)-e(F')}$ for some subhypergraph $F' \subseteq F$. In particular, when $F'$ is empty, the corresponding summand is $d^{e(F)}$. We may therefore rewrite~\eqref{eq:hom_trick} as
\begin{equation}\label{eq:many_terms}
\hom(F,H) = d^{e(F)}n^{v(F)} + \sum_{\substack{F'\subseteq F\\ e(F')\ge 1}}d^{e(F)-e(F')}\sum_{\varphi\colon  V(F) \to V} \prod_{f \in E(F')} g(\varphi(f)).
\end{equation}
 
We will argue that each of the sums 
\begin{equation}\label{eq:inner_sum}
\sum_{\varphi\colon V(F) \to V} \prod_{f \in E(F')} g(\varphi(f))
\end{equation}
is \emph{small}. To make this precise, let $F'$ be fixed and let $\{f_1,\ldots,f_{e(F')}\}$ be an ordering of the edges of $F'$ which certifies its $\cQ$-simplicity. Let $f'$ denote $f_{e(F')}$, the last edge in this ordering, and let $x_1,\ldots,x_k$ be the vertices of the edge $f'$, again ordered so as to certify $\cQ$-simplicity (see Definition~\ref{def:Qsimple}). We may rewrite~\eqref{eq:inner_sum} as
\begin{equation}\label{eq:split_sum}
	\sum_{\varphi\colon  V(F) \to V} \prod_{f \in E(F')} g(\varphi(f))
	=
	\sum_{\varphi'\colon  V(F)\setminus f' \to V}\sum_{\substack{\varphi\colon  V(F) \to V\\\varphi|_{V(F)\setminus f'} \equiv \varphi'}} \prod_{f \in E(F')} g(\varphi(f))
\end{equation}
and, for each (fixed) $\varphi'$, we may further rewrite the inner sum in~\eqref{eq:split_sum} as
\begin{equation}\label{eq:hom_wdisc}
	\sum_{\substack{\varphi\colon  V(F) \to V\\\varphi|_{V(F)\setminus f'} \equiv \varphi'}}\prod_{f \in E(F')} g(\varphi(f))
	=
	\sum_{\bv=(v_1,\dots,v_k)\in V^{[k]}}
	\sum_{\substack{\varphi\colon  V(F) \to V\\
 		  \varphi(x_i)=v_i \forall i\in[k] \\
		  \varphi|_{V(F)\setminus f'} \equiv \varphi'}}  g(\varphi(f')) \prod_{f \in E(F')\setminus\{f'\}} g(\varphi(f))\,.
\end{equation}

Finally, we explain how one may apply $\WDISCQd(\eps)$ to estimate the right-hand side of~\eqref{eq:hom_wdisc}.    
By $\cQ$-simplicity, for every $f\in E(F')\setminus\{f'\}$ there exists a set $Q\in\cQ$ with $\{i\colon x_i\in f\}\subseteq Q$. Therefore, there exists a partition of $E(F')\setminus\{f'\}$ into (possibly empty) sets $(E_Q)_{Q\in \cQ}$ such that for every $Q\in\cQ$ and $f\in E_Q$, we have $\{i\colon x_i\in f\}\subseteq Q$. For $f \in E(F')$, let us write $I_f=\{i\colon x_i\in f \cap f'\}$ to denote the indices of the elements appearing in $f \cap f'$, noting that $\bigcup_{f\in E_Q}I_f\subseteq Q$ for all $Q\in\cQ$.

For any $f \in E(F)$, $\varphi(f)$ is composed of two parts: the images of the vertices in $f \cap f' \subseteq \{x_1,\ldots,x_k\}$ and the images of the vertices in $f \sm f'$. In~\eqref{eq:hom_wdisc}, the images of these latter vertices are already fixed by $\varphi'$. With this in mind, we define functions 
$\left(w_Q\colon  \VQ{Q}\to[-1,1]\right)_{Q \in \cQ}$ by
\begin{equation}\label{eq:define_W}
	w_Q\left(\by\right)
	=
	\prod_{f\in E_Q} \Big(\indi_{E}\big(\{y_i\colon i\in I_f\}\cup\varphi'(f\setminus f')\big)-d\Big)\,. 
\end{equation}
That is, using $\by \in V^Q$ we pick images $\{y_i\colon i\in I_f\}$ for the elements $x_i$ appearing in the indices specified by $I_f$. Hence, if $\varphi$ is the extension of $\varphi'$ given by taking $y_i=\varphi(x_i)$ for all $i \in \bigcup_{f\in E_Q}I_f \subseteq Q$, the right-hand side of~\eqref{eq:define_W} corresponds exactly to $\prod_{f \in E_Q} g(\varphi(f))$.


Therefore, since, for any vector $\bz=(z_1,\ldots,z_k)\in V^{[k]}$, we have 
\[
	g(\bz)=g(\{z_1,\ldots,z_k\})=\indi_E(\{z_1,\ldots,z_k\})-d=\indi_{\arrow E}(\bz)-d\,,
\] 
we may rewrite the right-hand side of~\eqref{eq:hom_wdisc} as 
\[
	\sum_{\bv=(v_1,\dots,v_k)\in V^{[k]}}
	\sum_{\substack{\varphi\colon  V(F) \to V\\
 		  \varphi(x_i)=v_i \forall i\in[k] \\
		  \varphi|_{V(F)\setminus f'} \equiv \varphi'}}  g(\varphi(f')) \prod_{f \in E(F')\setminus\{f'\}} g(\varphi(f))
	=
	\sum_{\bv\in V^{[k]}} \left(\indi_{\arrow E}(\bv)-d\right)\prod_{Q\in\cQ} w_Q(\bv_Q)\,.
\]
By $\WDISCQdeps$, the right-hand side of the identity above is at most $\eps n^k$ in absolute value. Thus, we may bound~\eqref{eq:split_sum} (which is also~\eqref{eq:inner_sum}) by $\eps n^{v(F)}$. This in turn allows us to write~\eqref{eq:many_terms} as
\[
\hom(F,H) = d^{e(F)}n^{v(F)}\pm \big(2^{e(F)}-1\big)\eps n^{v(F)}=d^{e(F)}n^{v(F)}\pm\frac{\delta}{2} n^{v(F)},
\]
which completes the proof of~\eqref{eq:homo-bound}.
\end{proof}

\subsection{\texorpdfstring{$\mbs{\CLQd \Longrightarrow \DEVQd}$}{CL implies DEV}}
Recall that $N_F(H)$ denotes the number of labeled copies of $F$ in $H$. We also write $N^*_{F',F}(H)$ for the number of labeled copies of $F'$ that 
are \emph{induced with respect to~$F$ in~$H$}, that is, the number of injections $\varphi\colon V(F)\to V(H)$ such that for all $f\in E(F)$ we have $\varphi(f)\in E(H)$ if and only if $f\in E(F')$. The following lemma, whose proof by the principle of inclusion and exclusion follows verbatim from Facts~8 and~9 in~\cite{CHPS}, provides the required implication. We include its short proof for completeness.
\begin{lemma}\label{lem:cl_to_dev}
For every $k \geq 2$, every set system $\cQ\subset \powerset([k])\setminus\{[k]\}$, every $d \in [0,1]$, and every $\delta> 0$, 
there exists an $\eps>0$ such that if $H =(V,E)$ is an $n$-vertex $k$-uniform hypergraph that satisfies $\CLQdFeps$ for all $F\subseteq M_\cQ$, then $H$ satisfies $\DEVQd(\delta)$.
\end{lemma}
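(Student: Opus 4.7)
The plan is to expand the product $\prod_{f\in E(M)}(\indi_E(f)-d)$ by the distributive law, which turns the sum defining $\DEVQd$ into a signed combination of subgraph counts of sub-hypergraphs of $M_\cQ$, and then to estimate each such count via $\CLQd$.

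First, I would view a labeled copy $M$ of $M_\cQ$ as an injection $\psi\colon V(M_\cQ)\hookrightarrow V$ and expand
\begin{equation*}
\prod_{f\in E(M_\cQ)}\bigl(\indi_E(\psi(f))-d\bigr)=\sum_{S\subseteq E(M_\cQ)}(-d)^{e(M_\cQ)-|S|}\prod_{f\in S}\indi_E(\psi(f)).
\end{equation*}
Exchanging summations, for each fixed $S$ the inner sum counts injections $\psi$ sending every edge of $S$ into $E(H)$. Writing $F_S$ for the sub-hypergraph of $M_\cQ$ with edge set $S$ (after discarding isolated vertices) and $V_S=V(F_S)$, a short calculation shows that this inner sum equals $N_{F_S}(H)\cdot(n-|V_S|)_{v(M_\cQ)-|V_S|}=(1+o(1))\,N_{F_S}(H)\,n^{v(M_\cQ)-|V_S|}$.

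Next, since any edge ordering of $M_\cQ$ witnessing $\cQ$-simplicity restricts to one for $F_S$, each $F_S$ is itself $\cQ$-simple, so the hypothesis $\CLQd(F_S,\eps)$ may be applied to yield $N_{F_S}(H)=d^{|S|}n^{|V_S|}\pm\eps n^{|V_S|}$. Plugging this in, the main contribution from the inner sum becomes $d^{|S|}n^{v(M_\cQ)}$, and the full main term collapses:
\begin{equation*}
n^{v(M_\cQ)}\sum_{S\subseteq E(M_\cQ)}(-d)^{e(M_\cQ)-|S|}d^{|S|}=n^{v(M_\cQ)}(d-d)^{e(M_\cQ)}=0
\end{equation*}
by the binomial theorem. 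Only the error terms from $\CLQd$ and from the falling-factorial-vs-power-of-$n$ mismatch survive.

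To finish, I would bound the accumulated error by $\delta n^{v(M_\cQ)}$: each of the $2^{e(M_\cQ)}$ subsets $S$ contributes at most $\eps n^{v(M_\cQ)}$ from $\CLQd$ plus an $o(n^{v(M_\cQ)})$ term from the approximation of $(n-|V_S|)_{v(M_\cQ)-|V_S|}$ by $n^{v(M_\cQ)-|V_S|}$. Choosing, say, $\eps=\delta/2^{e(M_\cQ)+1}$ and taking $n$ sufficiently large makes the total error at most $\delta n^{v(M_\cQ)}$, which is $\DEVQd(\delta)$. The only mildly delicate point, and the likeliest place to slip, is tracking the isolated vertices in $F_S$ so that the powers of $n$ line up and the binomial cancellation is triggered; beyond that, the argument is pure bookkeeping.
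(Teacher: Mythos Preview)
Your proof is correct and follows essentially the same idea as the paper's: expand the product over edges of $M_\cQ$, reduce to subgraph counts, apply $\CLQd$ to each, and observe that the main terms cancel by the binomial identity $\sum_{S}(-d)^{e(M_\cQ)-|S|}d^{|S|}=0$. The paper organises the same computation slightly differently, first grouping labeled copies by which edges land in $E(H)$ (giving induced counts $N^*_{F',M_\cQ}(H)$) and then converting back to $N_F(H)$ via inclusion--exclusion; after simplification this yields exactly your expansion $\sum_{F}(-d)^{e(M_\cQ)-e(F)}N_F(H)$, so your route is in fact the more direct one. The only cosmetic discrepancy is that the paper keeps isolated vertices (working with spanning $F\subseteq M_\cQ$, so $v(F)=v(M_\cQ)$ throughout) and thereby avoids the falling-factorial-versus-power error and the accompanying ``$n$ sufficiently large'' clause; your version introduces that $o(n^{v(M_\cQ)})$ term by discarding isolated vertices, which is harmless in context but, strictly speaking, is not licensed by the lemma as stated.
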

\begin{proof}
We shall bound $\sum_{M}\prod_{f\in E(M)}  (\indi_E(f)-d)$ with $M$ running over all copies of $M_{\cQ}$ in the complete 
hypergraph $K^{(k)}_V$ on the vertex set $V$.
By the inclusion-exclusion principle, we have for every spanning $F'\subseteq M_\cQ$
\[
	N^*_{F',M_\cQ}(H)=\sum_{F'\subseteq  F\subseteq M_\cQ} (-1)^{e(F)-e(F')}N_F(H)\,.
\]
Since $\CLQdFeps$ holds for all $F\subseteq M_\cQ$, we see that
\begin{align*}
	\sum_{M}\prod_{f\in E(M)}  (\indi_E(f)-d) 
	&= 
	\sum_{F'\subseteq M_\cQ} N^*_{F',M_\cQ}(H) (1-d)^{e(F')}(-d)^{e(M_\cQ)-e(F')} \\
	&= 
	\sum_{F'\subseteq M_\cQ}(1-d)^{e(F')}(-d)^{e(M_\cQ)-e(F')}\sum_{F'\subseteq  F\subseteq M_\cQ} (-1)^{e(F)-e(F')}N_F(H) \\
	&\leq 
	\left|\sum_{F'\subseteq M_\cQ}(1-d)^{e(F')}(-d)^{e(M_\cQ)-e(F')}\sum_{F'\subseteq  F\subseteq M_\cQ} (-1)^{e(F)-e(F')} d^{e(F)}n^{v(M_\cQ)} \right|\\
	&\hspace{2cm}+2^{2e(M_\cQ)}\eps n^{v(M_Q)} \\
	& = 
	\delta n^{v(M_\cQ)}\,,
\end{align*}
where we chose $\eps=\delta/2^{2e(M_\cQ)}$ and used the binomial theorem to show that
\begin{align*}
\sum_{F'\subseteq M_\cQ} (1-d)^{e(F')}&(-d)^{e(M_\cQ)-e(F')}\sum_{F'\subseteq  F\subseteq M_\cQ} (-1)^{e(F)-e(F')} d^{e(F)}\\
& = \sum_{F'\subseteq M_\cQ} (1-d)^{e(F')}(-d)^{e(M_\cQ)-e(F')}  d^{e(F')}\sum_{F'\subseteq  F\subseteq M_\cQ} (-d)^{e(F)-e(F')} \\
& = \sum_{F'\subseteq M_\cQ} (1-d)^{e(F')}(-d)^{e(M_\cQ)-e(F')}  d^{e(F')} (1-d)^{e(M_\cQ)-e(F')}\\
& = (1-d)^{e(M_\cQ)}\sum_{F'\subseteq M_\cQ} (-d)^{e(M_\cQ)-e(F')}  d^{e(F')}\\
& =0\,.\qedhere
\end{align*}
\end{proof}

\subsection{\texorpdfstring{$\mbs{\DEVQd \Longrightarrow \WDISCQd}$}{DEV implies WDISC}}
Recall that $M_\cQ$ (for some $\cQ \subset \powerset([k])$) is the $k$-uniform hypergraph obtained from a sequence of doubling operations. 
Assume that  $\cQ \subset \powerset([k])$ consists of $\ell$ sets~$Q_1$,\ldots,$Q_{\ell}$ for some ordering of the sets of $\cQ$.
We set $\cQ_j=\{Q_1,\ldots,Q_j\}$ and let $M_{\cQj}$ be the subhypergraph of~$M_\cQ$ formed by the $j$ doublings around 
$Q_1, \dots, Q_j$. We also set $M_{\cQ_0}=M_{\emptyset}=K^{(k)}_k$. Given any $k$-partite $k$-uniform hypergraph $M$, we refer to the 
$j$-th vertex class of $M$ by $V_j(M)$ and we write $V_Q(M)=\bigcup_{j\in Q}V_j(M)$ for any $Q\subseteq [k]$. 

The implication $\DEVQd \Longrightarrow \WDISCQd$ is a consequence of the following lemma.

\begin{lemma}\label{lem:dev_to_wdisc}
For every $k \geq 2$, every set system $\cQ=\{Q_1,\ldots,Q_\ell\} \subset \powerset([k])\setminus\{[k]\}$, every $d \in [0,1]$, and every $\delta> 0$, 
there exists an $\eps>0$ such that, for all sufficiently large $n$, if $H =(V,E)$ is an $n$-vertex $k$-uniform hypergraph that satisfies
\begin{equation}\label{eq:wdev_}
\Bigg|\sum_{\varphi\colon V(M_\cQ)\to V}\prod_{f\in E(M_\cQ)} \left(\indi_{\arrow E}\big(\varphi(f)\big)-d\right)\Bigg|\le  \eps n^{v(M_\cQ)}\,,
\end{equation}
then $H$ satisfies $\mathrm{WDISC}_{\cQ,d}(\delta)$. 
\end{lemma}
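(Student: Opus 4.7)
The plan is to iterate the Cauchy--Schwarz inequality once for each set in $\cQ$, mirroring the sequence of doubling operations $\cl_{Q_1},\dots,\cl_{Q_\ell}$ that builds $M_\cQ$ out of a single edge. Set $g(\bv):=\indi_{\arrow E}(\bv)-d$ for $\bv\in V^{[k]}$, so that $|g|\le 1$, and denote
\[
S_0 := \sum_{\bv\in V^{[k]}} g(\bv)\prod_{Q\in\cQ}w_Q(\bv_Q),
\]
the quantity I wish to bound by $\delta n^k$. For $j=0,1,\dots,\ell$ I shall track the hybrid quantity
\[
T_j := \sum_{\varphi\colon V(M_{\cQ_j})\to V}\;\prod_{f\in E(M_{\cQ_j})} g(\varphi(f))\prod_{Q\in\cQ\sm\cQ_j}\;\prod_{f\in E(M_{\cQ_j})}w_Q(\varphi(f)_Q),
\]
so that $T_0=S_0$ (since $M_{\cQ_0}=K^{(k)}_k$ consists of a single edge) and $T_\ell$ is exactly the quantity in~\eqref{eq:wdev_}; hence $|T_\ell|\le \eps n^{v(M_\cQ)}$ by hypothesis.

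The engine of the proof is the inductive inequality
\[
|T_{j-1}|^2 \le n^{c_j}\,|T_j|,\qquad c_j := |V_{Q_j}(M_{\cQ_{j-1}})|,
\]
which I establish as follows. Split the variables of $\varphi\colon V(M_{\cQ_{j-1}})\to V$ in $T_{j-1}$ into \emph{base} variables lying in $V_{Q_j}(M_{\cQ_{j-1}})$ and \emph{free} variables lying in $V_{[k]\sm Q_j}(M_{\cQ_{j-1}})$. For every edge $f$ of $M_{\cQ_{j-1}}$ the weight $w_{Q_j}(\varphi(f)_{Q_j})$ depends only on the base variables, so the entire product $\prod_f w_{Q_j}(\varphi(f)_{Q_j})$ factors through the sum over the free variables. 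Cauchy--Schwarz in the base variables, combined with $|w_{Q_j}|\le 1$, gives the factor $n^{c_j}$, after which expanding the square doubles the free variables while leaving the base variables identified --- this is precisely the operation $\cl_{Q_j}$. The resulting configuration is therefore a labeled copy of $M_{\cQ_j}=\cl_{Q_j}(M_{\cQ_{j-1}})$, and each edge of $M_{\cQ_j}$ inherits both a $g$-factor and a $w_Q$-factor for every $Q\in\cQ\sm\cQ_j$, matching the definition of $T_j$ term by term.

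Iterating the inductive inequality and using the identity $c_j = 2v(M_{\cQ_{j-1}})-v(M_{\cQ_j})$, which follows from the vertex-count recursion $v(M_{\cQ_j}) = v(M_{\cQ_{j-1}}) + |V_{[k]\sm Q_j}(M_{\cQ_{j-1}})|$ together with $c_j + |V_{[k]\sm Q_j}(M_{\cQ_{j-1}})| = v(M_{\cQ_{j-1}})$, the exponents of $n$ telescope to
\[
\sum_{j=1}^\ell 2^{\ell-j}\,c_j \;=\; 2^\ell\, v(M_{\cQ_0})-v(M_\cQ) \;=\; 2^\ell k - v(M_\cQ).
\]
Combining with $|T_\ell|\le \eps n^{v(M_\cQ)}$ yields $|S_0|^{2^\ell}\le \eps\, n^{2^\ell k}$, so the choice $\eps = \delta^{2^\ell}$ concludes the proof. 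The main obstacle is combinatorial rather than analytic: verifying that a single Cauchy--Schwarz step really does produce the sum $T_j$ with the correct multiplicities on each remaining weight $w_Q$. This bookkeeping rests on the commutativity and associativity of the doubling operations observed in the setup preceding the lemma, which guarantee that the order in which the sets $Q_j$ are eliminated is immaterial for the combinatorial identity.
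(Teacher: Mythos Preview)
Your proof is correct and follows essentially the same approach as the paper: an iterated Cauchy--Schwarz argument, one application for each $Q_j\in\cQ$, in which the weight $w_{Q_j}$ is peeled off and the remaining inner sum is squared, reproducing the doubling $\cl_{Q_j}$. Your hybrid quantities $T_j$ coincide with the left- and right-hand sides of the paper's inequality~\eqref{eq:CS_step}, and your choice $\eps=\delta^{2^\ell}$ matches the paper's. The one genuine difference is in the bookkeeping of the exponent of $n$: the paper proves the identity $\sum_{j=0}^{\ell-1}2^{\ell-j-1}|V_{Q_{j+1}}(M_{\cQ_j})|+v(M_\cQ)=k2^\ell$ by expanding $|V_i(M_{\cQ_j})|=2^{j-\deg_{\cQ_j}(i)}$ and summing a geometric series vertex by vertex, whereas you observe the cleaner relation $c_j=2v(M_{\cQ_{j-1}})-v(M_{\cQ_j})$ and telescope directly to $2^\ell k-v(M_\cQ)$; this is a nice simplification of the same computation.
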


It is easy to see that~\eqref{eq:wdev_} is equivalent to $\DEVQd$ since all but $O_k(n^{v(M_\cQ)-1})$ functions $\varphi$ are injective and thus correspond to labeled copies of $M_\cQ$ in the complete $k$-uniform hypergraph on $V$. Moreover, since the doubling $\cl_{[k]}$ leaves the $k$-uniform hypergraph unchanged, taking $[k]\not\in \cQ$ is not a restriction.

\begin{proof}[Proof of Lemma~\ref{lem:dev_to_wdisc}]
Let $\cW = \left(\omega_Q\colon \VQ{Q} \to [-1,1]\right)_{Q \in \cQ}$ be any collection of weight functions. With $V(M_\emptyset)=[k]$, we write
\begin{equation}  \label{eq:fdisc_cauchy}
	\Bigg|\sum_{\bv\in V^{[k]}}\big(\indi_{\arrow E}(\bv)-d\big)\cW(\bv)\Bigg|^{2^\ell}
	= 
	\Bigg|\sum_{\varphi\colon V(M_\emptyset)\to V}\big(\indi_{\arrow E}(\varphi(1),\ldots,\varphi(k))-d\big)\cW\big(\varphi(1),\ldots,\varphi(k)\big)\Bigg|^{2^\ell}.
\end{equation}
We shall apply the Cauchy--Schwarz inequality $\ell$ times to~\eqref{eq:fdisc_cauchy}, each time separating a function~$\omega_Q$ (using the fact that $0\le \omega_Q^2\le 1$). Recalling that for $Q\subseteq [k]$ and $f=(x_1,\dots,x_k)$, $f_Q=(x_i\colon i\in Q)$, below we will show that 
for each $j = 0,\ldots,\ell-1$ we have
\begin{multline}\label{eq:CS_step}
	\Bigg|\sum_{\varphi\colon V(M_{\cQ_j})\to V}\prod_{f\in E(M_{\cQ_j})}\Big(\indi_{\arrow E}\big(\varphi(f)\big)-d\Big)
	\bigg(\prod_{Q\in\cQ\setminus\cQ_j} \omega_Q\big(\varphi(f_Q)\big)\bigg)\Bigg|^{2}\\
	\leq 
	n^{|V_{Q_{j+1}}(M_{\cQ_j})|}\cdot 
	\Bigg|\sum_{\varphi\colon V(M_{\cQ_{j+1}})\to V}\prod_{f\in E(M_{\cQ_{j+1}})}\Big(\indi_{\arrow E}\big(\varphi(f)\big)-d\Big)
	\bigg(\prod_{Q\in\cQ\setminus\cQ_{j+1}} \omega_Q\big(\varphi(f_Q)\big)\bigg)\Bigg|\,.
\end{multline}
In fact, to see~\eqref{eq:CS_step}, we rewrite the sum on the left-hand side of~\eqref{eq:CS_step} as a double sum in which the 
first sum is over all $\psi\colon V_{Q_{j+1}}(M_{\cQ_j})\to V$ and the second sum is over all extensions of $\psi$ to 
$\varphi\colon V(M_{\cQ_j})\to V$. Since $\varphi$ extends $\psi$ we have 
$\omega_{Q_{j+1}}(\varphi(f_{Q_{j+1}}))=\omega_{Q_{j+1}}(\psi(f_{Q_{j+1}}))$, where we view the edge $f\in E(M_{\cQ_j})$ 
as an ordered $k$-tuple (according to the $k$ vertex classes of $M_{\cQ_j}$), $f_Q$ as a $Q$-tuple and $\varphi(f)$ is 
the tuple of values of entries from $f$ under $\varphi$.  Thus, the left-hand side  of~\eqref{eq:CS_step} assumes the form
\[
	\Bigg|\sum_{\psi} 
	\prod_{f\in E(M_{\cQ_j})}\omega_{Q_{j+1}}\big(\psi(f_{Q_{j+1}})\big)
\sum_{\substack{\varphi\colon V(M_{\cQ_j})\to V\\ 
\varphi|_{V_{Q_{j+1}}(M_{\cQ_j})}\equiv\psi}}\prod_{f\in E(M_{\cQ_j})}\left(\indi_{\arrow E}\big(\varphi(f)\big)-d\right)
\bigg(\prod_{Q\in\cQ\setminus\cQ_{j+1}} \omega_Q\big(\varphi(f_Q)\big)\bigg)\Bigg|^{2}\!\!\,,
\]
where the first sum runs over all maps $\psi \colon V_{Q_{j+1}}(M_{\cQ_j})\to V$.

We then apply the Cauchy--Schwarz inequality with the product after the first sum forming the first sequence and the second sum forming the second sequence. The term $n^{|V_{Q_{j+1}}(M_{\cQ_j})|}$ on the right-hand side of~\eqref{eq:CS_step} comes from the first sequence after applying the Cauchy--Schwarz inequality and using  $\omega_{Q_{j+1}}^2\le 1$. Summing over the squares of the terms in the second sequence corresponds exactly to performing the doubling operation $\cl_{Q_{j+1}}$ -- the vertices outside of $V_{Q_{j+1}}(M_{\cQ_j})$ are doubled and all edges of $M_{\cQ_j}$ and their corresponding weight functions $\omega_Q$ are doubled as well. But this is exactly the sum on the right-hand side of~\eqref{eq:CS_step}, as required. 

Starting with~\eqref{eq:fdisc_cauchy} we apply~\eqref{eq:CS_step} $j=0,\ldots,\ell-1$  and obtain
\[
	\Bigg|\sum_{\bv\in V^{[k]}}\big(\indi_{\arrow E} (\bv)-d\big)\cW(\bv)\Bigg|^{2^\ell}  
	\le 
	\left(\prod_{j=0}^{\ell-1} \Big(n^{|V_{Q_{j+1}}(M_{\cQ_j})|}\Big)^{2^{\ell-j-1}}\right)\cdot
\left|\sum_{\varphi\colon V(M_{\cQ})\to V}\prod_{f\in E(M_{\cQ})}\left(\indi_{\arrow E}\big(\varphi(f)\big)-d\right)\right|\,.\label{eq:ktimes_CS}
\]
Owing to the  assumption~\eqref{eq:wdev_}, we arrive at
\begin{equation}\label{eq:wdisc_CS}
	\Bigg|\sum_{\bv\in V^{[k]}}\big(\indi_{\arrow E}(\bv)-d\big)\cW(\bv)\Bigg|^{2^\ell}
	\le 
	\bigg(\prod_{j=0}^{\ell-1} n^{|V_{Q_{j+1}}(M_{\cQ_j})|2^{\ell-j-1}}\bigg) \cdot \eps n^{v(M_\cQ)}\,.
\end{equation}

It remains to show that
\begin{equation}\label{eq:DEVDISCfinal}
	\sum_{j=0}^{\ell-1}2^{\ell-j-1}|V_{Q_{j+1}}(M_{\cQ_j})|+|V(M_\cQ)|=k2^\ell,
\end{equation}
since then the desired bound 
\[
\Bigg|\sum_{\bv\in V^{[k]}}\big(\indi_{\arrow E}(\bv)-d\big)\cW(\bv)\Bigg|\le \delta n^{k}\,.
\]
follows for $\eps=\delta^{2^\ell}$.

For the proof of~\eqref{eq:DEVDISCfinal} we observe that for every $i\in[k]$ and $j=0,\dots,\l$ 
we have 
\[
	\big|V_i(M_{\cQ_j})\big|=2^{j-\deg_{\cQ_j}(i)}\,,
\] 
since the $i$-th vertex of $K^{(k)}_k=M_\emptyset$ will be doubled for every edge of $Q\in\cQ_j$ with $i\not\in Q$.
Since~$\cQ=\cQ_\l$, we therefore obtain
\begin{align*}
\sum_{j=0}^{\ell-1}2^{\ell-j-1}|V_{Q_{j+1}}(M_{\cQ_j})|+|V(M_\cQ)|
& = \sum_{j=0}^{\ell-1}\sum_{i\in Q_{j+1}} 2^{\ell-1-\deg_{\cQ_j}(i)}  +\sum_{i=1}^k 2^{\ell-\deg_{\cQ}(i)} \\
& = \sum_{j=0}^{\ell-1}\sum_{i\in Q_{j+1}} 2^{\ell-\deg_{\cQ_{j+1}}(i)}  +\sum_{i=1}^k 2^{\ell-\deg_{\cQ}(i)}\\
& = \sum_{j=1}^{\ell}\sum_{i\in Q_{j}} 2^{\ell-\deg_{\cQ_{j}}(i)}  +\sum_{i=1}^k 2^{\ell-\deg_{\cQ}(i)}\\
& = \sum_{i\in \bigcup\cQ} \sum_{t=1}^{\deg_{\cQ}(i)} 2^{\ell-t}  +\sum_{i=1}^k 2^{\ell-\deg_{\cQ}(i)}\,.
\end{align*}
Viewing $\cQ$ as a (possibly non-uniform) hypergraph with vertex set $[k]$, we observe that  every 
isolated vertex~$i\in[k]\setminus\bigcup\cQ$ is not considered in the first double sum above and contributes 
$2^\ell$ to the second sum. Moreover, every vertex $i\in\bigcup\cQ$ contributes
\[
	2^\ell\Big(\frac{1}{2}+\frac{1}{4}+\dots+\frac{1}{2^{\deg_\cQ(i)}}\Big)+2^\ell\frac{1}{2^{\deg_\cQ(i)}}
	=2^\ell
\]
and, hence,~\eqref{eq:DEVDISCfinal} follows.
\end{proof}

\section{Distinguishing notions of quasirandomness}\label{sec:distinguish}
In this section we prove Proposition~\ref{lem:construction}, which roughly speaking asserts that the various notions of quasirandomness defined are distinct. We shall use the following notation and setup.
Let $V =[n]$ and order $V$ according to the natural ordering of $[n]$. For $\bv \in \binom{V}{k}$, we write $\bv^{(\mathrm{nat})}$ to denote the ordering of $v$ induced by the natural ordering of $[n]$. Then, given $Q \subseteq [k]$, we write $\bv_Q^{(\mathrm{nat})}$ to denote $(\bv^{(\mathrm{nat})})_Q$. Given $1 \leq i < k$ and a set $\cB \subseteq \binom{V}{i}$, we write~$H^{(k)}(\cB)$ to denote the $k$-uniform hypergraph whose vertex set is $V$ and where a set $\bv \in \binom{V}{k}$ is taken to be an edge of~$H^{(k)}(\cB)$ if the quantity 
$p_{\bv}= |\{\bv^{(\mathrm{nat})}_Q \in \arrow \cB\colon Q \in \cQ\}|$ is odd. The following lemma will facilitate the construction used in the proof of Proposition~\ref{lem:construction} below. 

\begin{lemma}\label{lem:B-exists}
For every $i \in [k-1]$ and $\eta >0$ there exists an $n_0$ such that, for every $n \geq n_0$, there is a set system $\cB \subseteq \binom{V}{i}$ with the following properties:
\begin{enumerate}[label=\rmlabel]
\item\label{itm:meet} 
For every sequence $\cG = (G_R)_{R \in \cR}$ of directed hypergraphs with $\cR \subseteq \powerset([i])$ having the property that $|R| < i$ for every $R \in \cR$,
\begin{equation}\label{eq:meet}
\big| \arrow \cB  \cap \cK_i(\cG) \big| = \tfrac{1}{2}|\cK_i(\cG)| \pm \eta n^i.
\end{equation}

\item\label{itm:density}
The edge density of $H^{(k)}(\cB)$ is $1/2 \pm \eta$.  

\item\label{itm:supp-dense}
If $\cF = (F_Q)_{Q \in \cQ}$ is the sequence of directed hypergraphs for $\cQ\subseteq \binom{[k]}{i}$ with $V(F_Q) = [n]$ and 
\[
	E(F_Q) = \big\{\bv \in V^{Q}\colon \bv^{(\mathrm{nat})} \notin \arrow \cB\big\}
\] 
for every $Q \in \cQ$, then 
$|\cK_k(\cF)| = (2^{-|\cQ|} \pm \eta) n^k$.
\end{enumerate}
\end{lemma}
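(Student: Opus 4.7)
My plan is to exhibit $\cB$ by a simple random construction: include each $i$-subset of $V$ in $\cB$ independently with probability~$1/2$. Writing $(X_B)_{B\in\binom{V}{i}}$ for the resulting independent Bernoulli$(1/2)$ variables, I will show that each of the three properties holds with probability tending to $1$ as $n\to\infty$, so a deterministic $\cB$ satisfying all three exists once $n$ is large.

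For part~(i), fix an admissible sequence $\cG=(G_R)_{R\in\cR}$ and, for each $B\in\binom{V}{i}$, let $N_B(\cG)$ be the number of orderings of $B$ lying in $\cK_i(\cG)$; thus $0\le N_B(\cG)\le i!$ and $\sum_B N_B(\cG)=|\cK_i(\cG)|$. Then
\[
|\arrow{\cB}\cap\cK_i(\cG)|=\sum_{B\in\binom{V}{i}}N_B(\cG)\,X_B
\]
is a weighted sum of independent Bernoullis with expectation $\tfrac12|\cK_i(\cG)|$ and $\sum_B N_B(\cG)^2\le i!\,|\cK_i(\cG)|=O(n^i)$. Hoeffding's inequality therefore gives deviation probability at most $2\exp(-c\eta^2 n^i)$ for some $c=c(i)>0$. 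Since every $R\in\cR$ satisfies $|R|<i$, there are at most $\exp(O(n^{i-1}))$ admissible pairs $(\cR,\cG)$, a factor that is negligible compared to $\exp(c\eta^2 n^i)$, so a union bound secures part~(i) with probability $1-o(1)$.

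For parts~(ii) and~(iii), the crucial observation is that for any ordered $k$-tuple $\bv$ with distinct entries and any two distinct $Q,Q'\in\cQ$ the $i$-subsets $\{\bv_Q^{(\mathrm{nat})}\}$ and $\{\bv_{Q'}^{(\mathrm{nat})}\}$ of $V$ are themselves distinct; hence the $|\cQ|$ events $\{\bv_Q^{(\mathrm{nat})}\in\arrow{\cB}\}_{Q\in\cQ}$ are mutually independent Bernoulli$(1/2)$ variables. Taking their XOR parity gives edge probability $1/2$ for $\bv\in E(H^{(k)}(\cB))$, while taking their joint complement gives support probability $2^{-|\cQ|}$ for $\bv\in\cK_k(\cF)$; consequently the expected values match the targets $\tfrac12\binom{n}{k}$ and $2^{-|\cQ|}k!\binom{n}{k}=(2^{-|\cQ|}+o(1))n^k$. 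Pairs $(\bv,\bv')$ contributing nonzero covariance must share some witness $i$-set, of which there are only $O(n^{2k-i})$ ordered pairs; this yields variance $O(n^{2k-i})$ in both cases, and Chebyshev's inequality gives failure probability $O(n^{-i})$.

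The main obstacle is making the union bound in part~(i) succeed: Hoeffding concentration of order $\exp(-\Omega(\eta^2 n^i))$ just defeats the $\exp(O(n^{i-1}))$ possible choices for $\cG$, and this balance relies crucially on the assumption $|R|<i$ for all $R\in\cR$ (without which $\cG$ would require $\Omega(n^i)$ bits to describe, matching the concentration and killing the union bound). Once all three estimates are in place, a final union bound over the three failure events shows that a random $\cB$ satisfying parts~(i), (ii), and~(iii) exists with probability tending to~$1$, so in particular a suitable deterministic $\cB$ exists once $n\ge n_0(\eta)$.
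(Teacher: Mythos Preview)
Your proposal is correct and follows essentially the same approach as the paper: a random $\cB$ with each $i$-set included independently with probability $1/2$, Chernoff/Hoeffding plus a union bound over the $2^{O(n^{i-1})}$ admissible $\cG$ for part~(i), and Chebyshev with a covariance count of $O(n^{2k-i})$ for parts~(ii) and~(iii). Your grouping $|\arrow{\cB}\cap\cK_i(\cG)|=\sum_B N_B(\cG)X_B$ is in fact slightly more careful than the paper, which writes the same quantity as $\sum_{\bv\in\cK_i(\cG)}\indi_{\arrow\cB}(\bv)$ and calls the summands independent (they are not quite, since different orderings of the same $B$ give the same indicator), though of course the conclusion is unaffected.
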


\begin{proof}
We prove that a randomly chosen subset $\cB\subseteq \binom{V}{i}$ satisfies all of the above assertions with positive probability when $n$ is sufficiently large. Suppose then that $\cB \subseteq \binom{V}{i}$ is a random subset of the $i$-sets of $V$ where each $i$-set is placed in $\cB$ independently with probability $1/2$. 

To show that~\ref{itm:meet} holds with probability $1 - o(1)$, fix $\cG = (G_R)_{R \in \cR}$ subject to the restriction on~$\cR$ in~\ref{itm:meet}. The random variable $\big| \arrow \cB \cap \cK_i(\cG)\big|$ satisfies $\E\big[\big|\arrow \cB \cap \cK_i(\cG)\big|\big] = |\cK_i(\cG)|/2$. As 
\[
	\big|\arrow \cB \cap \cK_i(\cG)\big| = \sum_{\bv \in \cK_i(\cG)}\indi_{\arrow \cB}(\bv)
\] 
is a sum of independent indicator random variables (that is, $\indi_{\arrow \cB}(\bv)$ is equal to $1$ if $\bv \in \arrow \cB$ and zero otherwise), it follows, by Chernoff's inequality (see, e.g.,~\cite{JLR}*{Corollary~2.3}), that 
$$
\P \Big(\big| |\arrow \cB \cap \cK_i(\cG)| - |\cK_i(\cG)|/2\big| \geq \eta n^i  \Big) \leq 2^{-\Omega(n^i)}.
$$
As the number of possible sequences $\cG$ is $2^{O(n^{i-1})}$, it follows that $\cB$ satisfies the first property with probability $1-o(1)$ for $n$ sufficiently large. 

We proceed to~\ref{itm:density}. Suppose $H^{(k)}(\cB) = (V,E)$. For any $\bv \in \binom{V}{k}$, we have 
\[
	\P(\bv \in E) = \P (\text{\,$p_{\bv}$ is odd\,}) = \tfrac{1}{2}
\] 
and so $\E[|E|] = \frac{1}{2}\binom{n}{k}$. Writing $e(H^{(k)}(\cB)) = \sum_{\bv \in \binom{V}{k}}\indi_{E}(\bv)$, 
we see, by Chebyshev's inequality, that
\[
	\P\Big( \big||E| - \tfrac{1}{2}\tbinom{n}{k} \big| \geq  \eta \tbinom{n}{k} \Big) 
	\leq 
	\frac{\E[|E|]}{\big(2\eta \E[|E|]\big)^2} + \frac{\sum_{\bu,\bv \in \binom{V}{k}}\COV(\indi_E(\bu),\indi_E(\bv))}{\big(2\eta \E[|E|]\big)^2}\,,
\]
where the sum on the right-hand side ranges over $k$-sets $\bu$ and $\bv$ such that $\bu^{(\mathrm{nat})}_Q = \bv^{(\mathrm{nat})}_Q$ for some~$Q \in \cQ$. The number of such pairs of sets is $O(n^{2k-i})$. As $i \geq 1$ and $(\E[|E|])^2 = \Omega(n^{2k})$, it follows that $\cB$ satisfies the second property with probability $1- o(1)$ for $n$ sufficiently large. 

For the third property~\ref{itm:supp-dense}, note that $\bv \in \cK_k(\cF)$ if and only if $\bv^{(\mathrm{nat})}_Q \notin \arrow \cB$ for every $Q \in \cQ$. Therefore, $\E[|\cK_k(\cF)|] = 2^{-|\cQ|} n (n-1) \cdots (n-k+1)$. Concentration around this expectation may be established via the second moment method in a similar manner to the argument used for~\ref{itm:density}.
\end{proof}


Next we derive Proposition~\ref{lem:construction} from Lemma~\ref{lem:B-exists}.

\begin{proof}[Proof of Proposition~\ref{lem:construction}]
It suffices to verify the case when $\cU$ and $\cQ$ only differ by one $i$-element set and, 
without loss of generality, we will assume that $\cQ\setminus\cU=\{Q^*\}$ for 
\[
	Q^* = [k-i+1,k] = \{k-i+1,\dots,k\}\,.
\] 
Set $\delta =2^{-|\cQ| -3}$ and, given $\eps > 0$, set $\eta = \min \{\eps / 2^{|\cU|}, 2^{-|\cQ|-2} \}$. With this $i$ and $\eta$, let $n_0$ be the integer whose existence is guaranteed by Lemma~\ref{lem:B-exists} and, for every $n \geq n_0$, let $\cB_n \subseteq \binom{V}{i}$ be a set system satisfying the properties stipulated in that lemma. We consider  $H_n = H_n^{(k)}(\cB_n)$. 

By~\ref{itm:density} the density of $H_n$ is as required. To see that $\cH = (H_n)_{n \in \N}$ does not satisfy $\DISC_{\cQ,1/2}(\delta)$, let $\cF$ be as in~\ref{itm:supp-dense}. Then $\arrow E(H_n) \cap \cK_k(\cF)$ is the empty set, so 
$$
\big| |\arrow E(H_n) \cap \cK_k(\cF)| - 2^{-1} |\cK_k(\cF)| \big|  = 2^{-1} |\cK_k(\cF)| 
= \big(2^{-|\cQ|-1} \pm \eta \big) n^k 
\geq 2^{-|\cQ|-2} n^k > \delta n^k.
$$
It remains to show that $\cH$ satisfies $\DISC_{\cU,1/2}(\eps)$. To that end, fix a sequence of directed hypergraphs $\cG = (G_U)_{U \in \cU}$. Our aim is to prove that
$$
\big| \arrow E(H_n) \cap \cK_k(\cG)\big| = |\cK_k(\cG)|/2 \pm \eps n^k.
$$

Recall that $Q^*=[k-i+1,k]$. For $\bell \in V^{[k-i]}$ and $\bu \in V^{Q^*}$, we write $\bell \circ \bu$ to denote the member of $V^{[k]}$ 
satisfying $(\bell \circ \bu)_{[1,k-i]} = \bell$ and $(\bell \circ \bu)_{Q^*} = \bu$. Define
$$
\Ext(\bell) = \{\bu \in V^{Q^*}\colon \bell \circ \bu \in \arrow E(H_n) \cap \cK_k(\cG) \}
$$
to be the set of ways the $(k-i)$-tuple $\bell$ can be extended to a member of $\arrow E(H_n) \cap \cK_k(\cG)$. 
Then 
$$
\left|\arrow E(H_n) \cap \cK_k(\cG)\right| = \sum_{\bell \in V^{[k-i]}} |\Ext(\bell)|.
$$
A tuple $\bell \in V^{[k-i]}$ is said to {\em have potential for extension} if 
$\bell_U \in E(G_U)$ for every $U \in \cU$ not meeting~$Q^*$. Otherwise, we say $\bell$ has no potential. Observe that $|\Ext(\bell)| = 0$ if $\bell$ has no potential. In particular, we may write 
$$
\left|\arrow E(H_n) \cap \cK_k(\cG)\right| = \sum_{\bell \in \cP} |\Ext(\bell)|,
$$
where $\cP \subseteq V^{[k-i]}$ denotes all tuples that have potential for extension. To say more about $|\Ext(\bell)|$ for $\bell \in \cP$, we require some further notation.

We write $\cR(\bell)$ for the set of all $\bu \in V^{Q^*}$ such that $(\bell \circ \bu)_U \in E(G_U)$ for all $U \in \cU_{Q^*}$, where 
\[
	\cU_{Q^*} = \{U \in \cU\colon U \cap Q^* \not= \emptyset\}\,,
\] 
noting that $\bu \in V^{Q^*}$ cannot lie in $\Ext(\bell)$ unless it satisfies this condition. For each $G_U \in \cG$ with $U \in \cU_{Q^*}$, we define two directed hypergraphs. The first, $G_{U,\bell}^{\in}$, has $V$ as its vertex set and 
\[
	\big\{\bv_{U \cap Q^*}\colon \bv \in V^{[k]}\text{ with }\bv_{[1,k-i]} = \bell \tand \bv_U \in E(G_U) \cap \arrow \cB_n \big\}
\] 
for its (directed) edge set. The second, $G_{U,\bell}^{\notin}$, is defined similarly with $\arrow \cB_n$ replaced by its complement~$\arrow \cC_n$. That is, the vertex set of $G_{U,\bell}^{\notin}$ is $V$ and its edge set is 
\[
	\big\{\bv_{U \cap Q^*}\colon \bv \in V^{[k]} \text{ with } \bv_{[1,k-i]} = \bell \tand \bv_U \in E(G_U) \cap \arrow \cC_n\big\}\,.
\] 

In order to determine whether (a fixed) $\bu \in \cR(\bell)$ is in $\Ext(\bell)$, we consider three parameters:
\begin{enumerate}[label=\alabel]
\item The parity of the quantity $|\{\bell_U \in \arrow \cB_n \colon U \in \cU\}|$.
We write $p_{\bell}$ for this parity, treated as a residue modulo $2$, and refer to it as the parity of $\bell$.

\item The parity of the quantity 
\begin{align*}
	\Big|\big\{(\bell \circ \bu)_{U \cap Q^*} \in  E(G_{U,\bell}^{\in})\colon U \in \cU \tand U \cap Q^* \not= \emptyset\big\}\Big| 
 = \sum_{U \in \cU_{Q^*}} \indi_{E(G_{U,\bell}^{\in})}(\bu_{U \cap Q^*}) \label{eq:parity2}
\end{align*}
This is the parity of the number of $U \in \cU$ meeting $Q^*$ for which $(\bell \circ \bu)_{U}$ is supported by both $E(G_U)$ and $\arrow \cB_n$.  
We write $p'_{\bu}$ for this parity, again treated as a residue modulo $2$, and refer to it as the parity of $\bu$.  

\item The value of   (or, alternatively, $\indi_{\arrow \cC_n}(\bu)$). 
\end{enumerate}

Setting $p_{\bell,\bu} \equiv p_{\bell}+p'_{\bu} \; \mathrm{mod}\; 2$, we see that if $\bell \in \cP$ and $\bu \in V^{Q^*}$, then 
\begin{equation*}\label{eq:ext}
\indi_{\Ext(\bell)}(\bu) = 
\begin{cases}
1, &\text{if}\ \bu \in \cR(\bell)\; \text{and} \; p_{\bell,\bu} \not\equiv \indi_{\arrow \cB_n}(\bu) \; \mathrm{mod} \; 2,\\
0, &\text{if}\ \bu \in \cR(\bell) \; \text{and} \; p_{\bell,\bu} \equiv \indi_{\arrow \cB_n}(\bu)\; \mathrm{mod} \; 2,\\
0, &\text{if}\ \bu \notin \cR(\bell)\,.
\end{cases}
\end{equation*}
For instance, if $\bell \in \cP$ has even parity and $\bu \in \cR(\bell)$ has odd parity (so that $p_{\bell,\bu} \equiv 1 \; \mathrm{mod}\; 2$), then, in order to have $\bell \circ \bu \in \arrow E(H_n)$, one must have $\indi_{\arrow \cB_n}(\bu) = 0$ to attain the desired parity as per the definition of $H_n$. 
Therefore, for a fixed $\bell \in \cP$,
\begin{equation}\label{eq:parity3}
|\Ext(\bell)| = |\{\bu \in \cR(\bell)\colon p_{\bell,\bu} \not\equiv \indi_{\arrow \cB_n}(\bu)\; \mathrm{mod}\; 2\}|.
\end{equation}

The pairs $(G_{U,\bell}^{\in},G_{U,\bell}^{\notin})_{U \in \cU_{Q^*}}$ give rise to $2^{|\cU_{Q^*}|}$ sequences of directed hypergraphs. Enumerate these sequences arbitrarily and let $\cG_{j,\bell} = (G_U^{(j)})_{U \in \cU_{Q^*}}$ with $G_U^{(j)} \in \{G_{U,\bell}^{\in},G_{U,\bell}^{\notin}\}$, denote the $j$-th sequence in this enumeration. We shall refer to such sequences as \emph{signature sequences}. We say a signature sequence $\cG_{j,\bell}$ is {\it odd} if the number of its members appearing with the superscript $\in$ is odd. Otherwise, we say the sequence is {\it even}. In this way, each signature sequence is assigned a {\it parity}. 

Note now that for each $i$-tuple $\bu \in \cR(\bell)$ with parity $p'_{\bu}$ there exists a unique signature sequence~$\cG_{j,\bell}$ of the same parity such that $\bu  \in \cK_i(\cG_{j,\bell})$, given by taking 
\[
	G_U^{(j)} = 
	\begin{cases} 
		G_{U,\bell}^{\in},  & \text{if}\ \bu_{U \cap Q^*} \in E(G_{U,\bell}^{\in}),\\
		G_{U,\bell}^{\notin}, & \text{if}\ \bu_{U \cap Q^*} \in E(G_{U,\bell}^{\notin})\,.
	\end{cases}
\]
Therefore, since $\cK_i(\cG_{j,\bell}) \subseteq \cR(\bell)$ for each $j$, we see that the sets $\big(\cK_i(\cG_{j,\bell})\big)_{j=1}^{2^{|\cU_{Q^*}|}}$ form a partition of $\cR(\bell)$.

Given $\bell \in \cP$ and a signature sequence $\cG_{j,\bell}$ of parity $p$, we set 
$$
f(\bell,\cG_{j,\bell}) = 
\begin{cases}
\arrow \cB_n, & \text{if}\ p_{\bell} + p \equiv 0 \; \mathrm{mod}\; 2\,\\
\arrow \cC_n, &  \text{if}\ p_{\bell} + p \equiv 1 \; \mathrm{mod}\; 2\,.
\end{cases}
$$
By the discussion above, we may then rewrite~\eqref{eq:parity3} as
$$
|\Ext(\bell)| = \sum_{j=1}^{2^{|\cU_{Q^*}|}} \big|f(\bell,\cG_{j,\bell}) \cap \cK_i(\cG_{j,\bell})\big|,
$$ 
which in turn yields
\begin{equation}\label{eq:sequences}
|\arrow{E}(H_n) \cap \cK_k(\cG)| = \sum_{\bell \in \cP} \sum_{j=1}^{2^{|\cU_{Q^*}|}} \big|f(\bell,\cG_{j,\bell}) \cap \cK_i(\cG_{j,\bell})\big|.\end{equation}

We now claim that  
\begin{equation}\label{eq:support}
|\cK_k(\cG)| = \sum_{\bell \in \cP} \sum_{j=1}^{2^{|\cU_{Q^*}|}} |\cK_i(\cG_{j,\bell})|.
\end{equation}
To see this, fix $\bv \in \cK_k(\cG)$ and write $\bv = \bell \circ \bu$ where $\bv_{[k-i+1]} = \bell$ and $\bv_{Q^*} = \bu$. For such a~$\bv$, we have $\bv_U \in E(G_U)$ for every $U \in \cU$, so that $\bell \in \cP$ and $\bu \in \cR(\bell)$. The inclusion of the members of the sequence $(\bv_U)_{U \in \cU_{Q^*}}$ in $\arrow \cB_n$ or $\arrow \cC_n$ defines a unique signature sequence (with respect to $\bell$), namely, $\cG_{j^*,\bell}$ for some appropriate $j^*$, such that $\bu \in \cK_i(\cG_{j^*,\bell})$. Indeed, $\bv_U = (\bell \circ \bu)_U \in E(G_U)$ for each $U \in \cU_{Q^*}$, so that $(\bell \circ \bu)_U \in \arrow \cB_n$ implies that $\bu_{U \cap Q^*} \in E(G^{\in}_{U,\bell})$ and $(\bell \circ \bu)_U \in \arrow \cC_n$ implies that $\bu_{U \cap Q^*}\in  E(G^{\notin}_{U,\bell})$. Therefore, every $\bv \in \cK_k(\cG)$ can be written as $\bell\circ \bu$ with $\bell \in \cP$ and $\bu \in \cK_i(\cG_{j^*,\bell})$ for some $j^*$. Conversely, given $\bell \in \cP$ and $\bu \in \cK_i(\cG_{j,\bell}) \subseteq \cR(\bell)$ for some $j$,  the tuple~$\bell \circ \bu $ automatically satisfies $(\bell \circ \bu)_{U} \in E(G_U)$ for every $U \in \cU$. The claim then follows.

Returning to~\eqref{eq:sequences}, we see that 
\begin{multline*}
	\big| \arrow E(H_n) \cap \cK_k(\cG) \big| 
	= 
	\sum_{\bell \in \cP} \sum_{j=1}^{2^{|\cU_{Q^*}|}} \big| f(\bell,j) \cap \cK_i(\cG_{j,\bell}) \big| 
	\overset{\eqref{eq:meet}}{=} 
	\sum_{\bell \in \cP} \sum_{j=1}^{2^{|\cU_{Q^*}|}} \big( |\cK_i(\cG_{j,\bell})|/2 \pm \eta n^i \big) \\
	= 
	\frac{1}{2} \sum_{\bell \in \cP} \sum_{j=1}^{2^{|\cU_{Q^*}|}} |\cK_i(\cG_{j,\bell})| 
		\pm 2^{|\cU|} \eta \sum_{\bell \in V^{k-i}}  n^i
\overset{\eqref{eq:support}}{=} \frac{|\cK_k(\cG)|}{2} \pm \eps n^k,
\end{multline*}
as required.
\end{proof}

\vspace{3mm}
\noindent
\textbf{Acknowledgements.} We are indebted to the anonymous referee for their careful review.

\bibliographystyle{amsplain}
\bibliography{quasi-hyper}

\end{document}